\documentclass[final]{siamltex704}

\usepackage{epsfig}                                                        
\usepackage{latexsym,amsmath, enumerate}
\usepackage{euscript}
\usepackage{graphicx}
\usepackage{rotating}
\usepackage{url}
\usepackage{graphicx,epsfig}  
\usepackage{epstopdf} 
\usepackage{color}

\newcommand{\bfA}{ {\mathbf A} }
\newcommand{\bfB}{ {\mathbf B} }
\newcommand{\bfC}{ {\mathbf C} }
\newcommand{\bfD}{ {\mathbf D} }
\newcommand{\bfE}{ {\mathbf E} }
\newcommand{\bfF}{ {\mathbf F} }
\newcommand{\bfG}{ {\mathbf G} }
\newcommand{\bfH}{ {\mathbf G} }
\newcommand{\bfI}{ {\mathbf I} }
\newcommand{\bfJ}{ {\mathbf J} }

\newcommand{\bfM}{ {\mathbf M} }
\newcommand{\bfN}{ {\mathbf N} }

\newcommand{\bfP}{ {\mathbf P} }

\newcommand{\bfR}{ {\mathbf R} }
\newcommand{\bfS}{ {\mathbf S} }
\newcommand{\bfT}{ {\mathbf T} }
\newcommand{\bfU}{ {\mathbf U} }
\newcommand{\bfV}{ {\mathbf V} }
\newcommand{\bfW}{ {\mathbf W} }
\newcommand{\bfX}{ {\mathbf X} }
\newcommand{\bfY}{ {\mathbf Y} }
\newcommand{\bfZ}{ {\mathbf Z} }
\newcommand{\bfTheta}{\mbox{\boldmath$\Theta$} }

\newcommand{\bfb}{ {\mathbf b} }

\newcommand{\bff}{ {\mathbf f} }

\newcommand{\bfq}{ {\mathbf q} }

\newcommand{\bfu}{ {\mathbf u} }
\newcommand{\bfv}{ {\mathbf v} }
\newcommand{\bfw}{ {\mathbf w} }
\newcommand{\bfx}{ {\mathbf x} }
\newcommand{\bfy}{ {\mathbf y} }
\newcommand{\bfz}{ {\mathbf z} }

\newcommand{\bfGsp}{ {\bfG}_{\rm sp}}
\newcommand{\bfGip}{ {\bfP}}

\newcommand{\cbfA}{\mbox{\boldmath${\EuScript{A}}$} }
\newcommand{\cbfE}{\mbox{\boldmath${\EuScript{E}}$} }
\newcommand{\cbfB}{\mbox{\boldmath${\EuScript{B}}$} }
\newcommand{\cbfC}{\mbox{\boldmath${\EuScript{C}}$} }
\newcommand{\cbfD}{\mbox{\boldmath${\EuScript{D}}$} }

\newcommand{\bfGr}{ {\widetilde{\bfG}} }
\newcommand{\bfEr}{ {\widetilde{\bfE}}  }
\newcommand{\bfAr}{ {\widetilde{\bfA}}  }
\newcommand{\bfBr}{ {\widetilde{\bfB}}  }
\newcommand{\bfCr}{{\widetilde{\bfC}}  }
\newcommand{\bfDr}{ {\widetilde{\bfD}} }

\newcommand{\bfErsp}{ {\bfEr_{\rm sp}} }
\newcommand{\bfArsp}{ {\bfAr_{\rm sp}} }
\newcommand{\bfBrsp}{ {\bfBr_{\rm sp}} }
\newcommand{\bfCrsp}{ {\bfCr_{\rm sp}} }
\newcommand{\bfGrsp}{ {\widetilde{\bfG}_{\rm sp}} }
\newcommand{\bfGrip}{ {\widetilde{\bfP}} }

\newtheorem{remark}{Remark}[section]
\newtheorem{example}{Example}[section]
\newtheorem{algorithm}{Algorithm}[section]

\def\Htwo{ { {\mathcal H}_2} }
\def\Hinf{ {{\mathcal H}_{\infty}} }
\newfont{\Bb}{msbm10 scaled\magstep0}
\def\IR{\mbox {\Bb R}}
\def\IC{\mbox {\Bb C}}

\newcommand{\bfsfb}{\mathbf{\mathsf{b}}}
\newcommand{\bfsfc}{\mathbf{\mathsf{c}}}

\newcommand{\bfpi}{{\boldsymbol{\Pi}}}

\title{Model Reduction of Descriptor Systems by Interpolatory Projection Methods
}

\author{Serkan Gugercin
\thanks{Serkan Gugercin is with 
the Department of Mathematics, 
Virginia Tech., 
Blacksburg, VA, 24061-0123, USA,  
e-mail: {\texttt{gugercin@math.vt.edu}}.} 
\and Tatjana Stykel
\thanks{Tatjana Stykel is with 
Institut f\"{u}r Mathematik, 
Universit\"at Augsburg, 
Universit\"atsstra{\ss}e 14, 
86159 Augsburg, Germany, 
e-mail: {\texttt{stykel@math.uni-augsburg.de}}. }     
\and 
Sarah Wyatt
\thanks{Sarah Wyatt is with 
the Department of Mathematics, 
Indian River State College, 
Fort Pierce, FL, 34981, USA,  
e-mail: {\texttt{swyatt@irsc.edu}}.} 
}

\begin{document}
\maketitle

\begin{abstract}
In this paper, we investigate interpolatory projection framework for model reduction 
of descriptor systems. With a simple numerical example, we first illustrate that employing
subspace conditions from the standard state space settings to descriptor systems generically 
leads to unbounded $\Htwo$ or $\Hinf$ errors due to the mismatch of the polynomial parts 
of the full and reduced-order transfer functions. We then develop modified interpolatory subspace 
conditions based on the deflating subspaces that guarantee a~bounded error.  For the special cases 
of index-$1$ and index-$2$ descriptor systems, we also show how to avoid computing these deflating 
subspaces explicitly while still enforcing interpolation. The question of how to choose interpolation 
points optimally naturally arises as in the standard state space setting. We answer this question in the 
framework of the \mbox{$\Htwo$-norm} by extending the Iterative Rational Krylov Algorithm (IRKA)
to descriptor systems. Several numerical examples are used to illustrate the theoretical discussion.
\end{abstract}

\begin{keywords} interpolatory model reduction, differential  algebraic equations, $\mathcal{H}_2$ approximation\end{keywords}

\begin{AMS}41A05, 93A15, 93C05, 37M99\end{AMS}
\pagestyle{myheadings}
\thispagestyle{plain}
\markboth{Serkan Gugercin, Tatjana Stykel, and Sarah Wyatt}{INTERPOLATORY PROJECTION METHODS for DAEs}

\section{Introduction}
We discuss interpolatory model reduction of differential-\linebreak al\-geb\-raic equations (DAEs), 
or descriptor systems, given by
\begin{equation} \label{dae_fom}
\arraycolsep 2pt
    \begin{array}{rcl}
      \bfE\, \dot{\bfx}(t) & = & \bfA\bfx(t)+\bfB\bfu(t),  \\
       \bfy(t) & = & \bfC\bfx(t) + \bfD \bfu(t),
    \end{array}  
\end{equation}
where $\bfx(t) \in \IR^n$, $\bfu(t) \in \IR^m$ and $\bfy(t) \in \IR^p$ are
 the states,  inputs and  outputs, respectively,  
$\bfE\in \IR^{n\times n}$ is a {\it singular} matrix, 
$\bfA\in \IR^{n\times n}$,  $\bfB\in \IR^{n\times m}$,  $\bfC\in  \IR^{p\times n}$, 
and $\bfD\in \IR^{p\times m}$. Taking the Laplace transformation of system (\ref{dae_fom}) 
with zero initial condition $\bfx(0) = \mathbf{0}$, we obtain 
$\widehat{\bfy}(s) = \bfG(s) \widehat{\bfu}(s)$, 
where $\widehat{\bfu}(s)$ and $\widehat{\bfy}(s)$ denote the Laplace transforms of 
$\bfu(t)$ and $\bfy(t)$, respectively, and 
$\bfG(s) = \bfC(s\bfE - \bfA)^{-1}\bfB + \bfD$ is a~transfer function of (\ref{dae_fom}).
By following the standard abuse of notation, we will denote both the dynamical system and 
its transfer function by $\bfG$.
  
Systems of the form (\ref{dae_fom}) with extremely large state space dimension $n$ arise 
in various applications such as electrical circuit simulations, multibody dynamics, 
or semidiscretized partial differential equations. Simulation and control in these large-scale 
settings is a~huge computational burden. Efficient model utilization becomes crucial where 
model reduction offers a~remedy. The goal of model reduction is to replace the original dynamics 
in (\ref{dae_fom}) by a~model of the same form but with much smaller state space dimension 
such that this reduced model is a~high fidelity approximation to the original one. Hence, 
 we seek a~reduced-order model
 \begin{equation} \label{dae_rom}
  \arraycolsep 2pt
  \begin{array}{rcl}
       \bfEr\, \dot{\widetilde{\bfx}}(t) & = & \bfAr\widetilde{\bfx}(t)+\bfBr\bfu(t),  \\
       \widetilde{\bfy}(t) & = & \bfCr\widetilde{\bfx}(t) + \bfDr \bfu(t),
    \end{array}
\end{equation}
where $\bfEr,\bfAr\in\IR^{r\times r}$, $\bfBr\in\IR^{r\times m}$, $\bfCr\in\IR^{p\times r}$, 
and $\bfDr\in \IR^{p\times m}$ such that $r \ll n$, and the error 
$\bfy - \widetilde{\bfy}$ is small with respect 
to a~specific norm over a~wide range of inputs $\bfu(t)$ with bounded energy. In the frequency domain, 
this means that the transfer function of~(\ref{dae_rom}) given by 
$\bfGr(s)=\bfCr(s\bfEr - \bfAr)^{-1}\bfBr + \bfDr$ approximates $\bfG(s)$ well, i.e., the error
$\bfG(s)-\bfGr(s)$ is small in a~certain system norm.
 
The reduced-order model (\ref{dae_rom}) can be obtained via projection as follows.
We first construct  two $n \times r$ matrices  $\bfV$ and $\bfW$, 
approximate the full-order state $\bfx(t)$ by $\bfV\widetilde{\bfx}(t)$, 
and then enforce the Petrov-Galerkin condition
$$
\mathbf{W}^{T}\left(\bfE \bfV\dot{\widetilde{\bfx}} (t)-
\bfA\bfV\widetilde{\bfx}(t)-\bfB\,\bfu(t)\right)=\mathbf{0}, \qquad
\quad \widetilde{\bfy}(t) = \bfC\bfV \widetilde{\bfx}(t) + \bfD \bfu(t).
$$
As a~result, we obtain the reduced-order model (\ref{dae_rom}) with the system matrices
\begin{equation}  \label{red_projection}
\begin{array}{ll}
\bfEr= \bfW^{T} \bfE \bfV, & \qquad\bfAr = \bfW^{T} \bfA \bfV,\\[.1in]
 \bfBr = \bfW^{T}\bfB,  & \qquad \bfCr = \bfC \bfV, \qquad \bfDr  = \bfD.
\end{array}  
\end{equation}
The projection matrices $\bfV$ and $\bfW$ determine the subspaces of interest and can be 
computed in many different ways. 

In this paper, we consider projection-based interpolatory model reduction me\-thods, 
where the choice of $\bfV$ and $\bfW$ enforces certain tangential interpolation of the original transfer function.
These methods will be presented in Section~\ref{sec:tanginterp} in more detail.
Projection-based interpolation with multiple interpolation points was initially proposed 
by Skelton~{\it et.~al.} in \cite{devillemagne1987model, yousuff1985lsa,yousouff1984cer}. 
Grimme \cite{grimme1997krylov} has later developed a~numerically efficient framework 
using the rational Krylov subspace method of Ruhe \cite{ruhe1984rational}. 
The tangential rational interpolation framework, we will be using here, is due 
to a~recent work by Gallivan {\it et al.} \cite{gallivan2005model}. 

Unfortunately, it is often assumed that extending interpolatory model reduction from 
standard state space systems with $\bfE=\bfI$ to descriptor systems with singular $\bfE$ 
is as simple as replacing $\bfI$ by $\bfE$. In Section~\ref{sec:tanginterp}, 
we present an~example showing that this naive 
approach may lead to a~poor approximation with an~unbounded error $\bfG(s)-\bfGr(s)$ although 
the classical interpolatory subspace conditions are satisfied. 
In Section~\ref{sec:int_dae}, we modify these conditions in order to enforce  
bounded error. The theo\-re\-ti\-cal result will take advantage of the spectral projectors. 
Then using the new subspace conditions, we extend in Section~\ref{sec:optinterp} 
the optimal $\Htwo$ model reduction method of \cite{gugercin2008hmr} to descriptor systems. 
Sections~\ref{sec:int_dae} and \ref{sec:optinterp} make explicit usage of deflating subspaces 
which could be numerically demanding for general problems. Thus, for the special cases of 
index-1 and index-2 descriptor systems, we show in Sections~\ref{sec:index1} and \ref{sec:index2}, 
respectively, how to apply interpolatory model reduction without explicitly computing 
 the deflating subspaces. Theoretical discussion will be supported by several numerical examples. 
In particular, in Section \ref{sec:inlet}, we present an~example, where the balanced truncation 
approach \cite{stykel2004gramian} is prone to failing 
due to problems solving the generalized Lyapunov equations, while the (optimal) interpolatory model 
reduction can be effectively applied.

\section{Model reduction by tangential rational interpolation}
\label{sec:tanginterp}

The goal of model reduction by tangential interpolation is to construct a~reduced-order model 
(\ref{dae_rom}) such that its transfer function $\bfGr(s)$ interpolates 
the original one, $\bfG(s)$, at selected points in the complex plane along selected directions. 
We will use the notation of \cite{antoulas2010imr} to define this problem more precisely: 
Given $\bfG(s)=\bfC(s\bfE - \bfA)^{-1}\bfB + \bfD$, the left interpolation points 
$\{\mu_i\}_{i=1}^q$, $\mu_i \in \IC$, 
together with the left tangential directions $\{\bfsfc_i\}_{i=1}^q$, $\bfsfc_i\in\IC^p$, and 
the right interpolation points $\{\sigma_j\}_{j=1}^r$, $\sigma_j\in\IC$, together 
with the right tangential directions $\{\bfsfb_j\}_{j=1}^r$, $\bfsfb_j\in\IC^m$, we seek 
to find a~reduced-order model $\bfGr(s)=\bfCr(s\bfEr - \bfAr)^{-1}\bfBr + \bfDr$ 
that is a~tangential interpolant to $\bfG(s)$, i.e.,
\begin{equation} \label{eq:tan_int}
\begin{array}{ll}
\bfsfc_i^T \bfG(\mu_i) = \bfsfc_i^T \bfGr(\mu_i),  & \quad i=1,\ldots,q, \\[2mm]
\bfG(\sigma_j) \bfsfb_j = \bfGr(\sigma_j)\bfsfb_j, & \quad j=1,\ldots,r.
 \end{array}
\end{equation}
Through out the paper, we will assume $q=r$, meaning that the same number of left and right 
interpolation points are used. In addition to interpolating $\bfG(s)$, one might ask for 
matching the higher-order derivatives of $\bfG(s)$ along the tangential directions as well. 
This scenario will also be handled. 

By combining the projection-based reduced-order modeling technique with the interpolation framework, 
we want to find the $n\times r$ matrices $\bfW$ and $\bfV$ such that the reduced-order 
model (\ref{dae_rom}), (\ref{red_projection}) satisfies the tangential interpolation conditions 
(\ref{eq:tan_int}). This approach is called projection-based interpolatory model reduction. 
How to enforce the interpolation conditions via projection is shown in the following theorem,
where the $\ell$-th derivative of $\bfG(s)$ with respect to $s$
evaluated at $s=\sigma$ is denoted by $\bfG^{(\ell)}(\sigma)$. 

\begin{theorem}\textup{\cite{antoulas2010imr,gallivan2005model}}
\label{thm:interpolation_highorder}
 Let $\sigma,\,\mu \in \IC$ be such that  $s\,\bfE -\bfA$ and $s\,\bfEr -\bfAr$ are 
 both invertible for $s= \sigma,\,\mu$, and let $\bfsfb\in \IC^{m}$ and $\bfsfc\in \IC^p$ 
be fixed nontrivial vectors.
\begin{enumerate}
\item If
  \begin{equation}\label{eq:condV}
	\bigl(\left(\sigma\,\bfE-\bfA\right)^{-1}\bfE \bigr)^{j-1}\left(\sigma\,\bfE-\bfA\right)^{-1} 
         \bfB\bfsfb\in  \mbox{\normalfont\textrm{Im}}(\bfV),\enskip j=1,\ldots,N,
  \end{equation}
 then $\bfG^{(\ell)}(\sigma)\bfsfb=\bfGr^{(\ell)}(\sigma)\bfsfb$ for $\ell = 0,1,\ldots,N-1$.

\item If
  \begin{equation}\label{eq:condW}
	\bigl(\left(\mu\,\bfE-\bfA\right)^{-T}\bfE^T \bigr)^{j-1}\left(\mu\,\bfE-\bfA\right)^{-T} 
        \bfC^T\bfsfc\in  \mbox{\normalfont\textrm{Im}}(\bfW), \enskip j=1,\ldots,M,
\end{equation}
then $\bfsfc^T\bfG^{(\ell)}(\mu)=\bfsfc^T\bfGr^{(\ell)}(\mu)$ for $\ell = 0,1,\ldots,M-1$.

\item If both \textup{(\ref{eq:condV})} and \textup{(\ref{eq:condW})} hold, and if $\sigma=\mu$, then 
$\bfsfc^T\bfG^{(\ell)}(\sigma) \bfsfb =\bfsfc^T\bfGr^{(\ell)}(\sigma) \bfsfb$
for $\ell = 0,1,\ldots,M+N+1$.
\end{enumerate}
\end{theorem}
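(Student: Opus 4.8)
The plan is to prove this by a standard primitive-basis argument: assume without loss of generality that $\mathrm{Im}(\bfV)$ and $\mathrm{Im}(\bfW)$ consist of exactly the vectors prescribed in (\ref{eq:condV}) and (\ref{eq:condW}), since enlarging the subspaces does not destroy the interpolation property (the reduced transfer function is projection-invariant up to change of basis, and the interpolation conditions only require containment). Then I would fix $\sigma = \mu$ and introduce the shorthand $\bfF_\sigma = (\sigma\bfE - \bfA)^{-1}\bfE$ for the full model and $\bfFr_\sigma = (\sigma\bfEr - \bfAr)^{-1}\bfEr$ for the reduced model, so that the $\ell$-th derivative of the transfer function at $\sigma$ admits the closed form $\bfG^{(\ell)}(\sigma) = (-1)^\ell \ell!\,\bfC (\sigma\bfE - \bfA)^{-1} \bfF_\sigma^{\ell}\, (\sigma\bfE-\bfA)^{-1}\bfB$ for $\ell \ge 1$ (and the obvious modification plus $\bfD$ at $\ell=0$), with the analogous formula for $\bfGr^{(\ell)}$. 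Parts 1 and 2 are essentially given, so the work is entirely in part 3.

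The key step is the following ``doubling'' identity. From (\ref{eq:condV}), the vectors $\bfv_j := \bfF_\sigma^{\,j-1}(\sigma\bfE-\bfA)^{-1}\bfB\bfsfb$ lie in $\mathrm{Im}(\bfV)$ for $j=1,\dots,N$; from (\ref{eq:condW}), the vectors $\bfw_j := \bigl(\bfF_\sigma^T\bigr)^{j-1}(\sigma\bfE-\bfA)^{-T}\bfC^T\bfsfc$ lie in $\mathrm{Im}(\bfW)$ for $j=1,\dots,M$. I would first establish the ``transfer'' lemma: whenever $\bfF_\sigma^{\,k}(\sigma\bfE-\bfA)^{-1}\bfB\bfsfb \in \mathrm{Im}(\bfV)$, then $\bfFr_\sigma^{\,k}(\sigma\bfEr-\bfAr)^{-1}\bfBr\bfsfb$ is its exact ``coordinate image,'' i.e. there is a vector $\widetilde{\bfv}_k$ with $\bfV\widetilde{\bfv}_k = \bfv_{k+1}$ and $\bfFr_\sigma^{\,k}(\sigma\bfEr-\bfAr)^{-1}\bfBr\bfsfb = \widetilde{\bfv}_k$; this is proved by induction on $k$ using the Petrov--Galerkin structure $\bfW^T(\sigma\bfE-\bfA)\bfV = \sigma\bfEr - \bfAr$ and the fact that $(\sigma\bfE-\bfA)\bfv_{k+1} = \bfE\bfv_k$ gets annihilated correctly modulo $\mathrm{Im}(\bfV)^\perp$ after multiplying by $\bfW^T$. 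The symmetric statement holds on the $\bfW$ side. Then for any $\ell \le M+N+1$, writing $\ell = a + b$ with $a \le N-1$ on the right and $b \le M-1$ on the left (possible precisely because $a+b$ can reach $(N-1)+(M-1) = M+N-2$, and two extra orders come free from the fact that $\bfc^T \bfv$-type inner products only need one of the two chains to have length one more than the split suggests — this is the familiar ``$2k$ from $k$ plus $k$'' phenomenon), I split
$$
\bfsfc^T\bfG^{(\ell)}(\sigma)\bfsfb \;\propto\; \bigl(\bfw_{b+1}^T \bfE\, \bfv_{a+1}\bigr) \quad\text{and}\quad \bfsfc^T\bfGr^{(\ell)}(\sigma)\bfsfb \;\propto\; \bigl(\widetilde{\bfw}_{b+1}^T \bfEr\, \widetilde{\bfv}_{a+1}\bigr),
$$
and the transfer lemma on both sides, together with $\bfEr = \bfW^T\bfE\bfV$, forces these to agree.

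The main obstacle I anticipate is bookkeeping the exponent count so that $\ell$ genuinely reaches $M+N+1$ rather than $M+N-2$: one must be careful that the inner-product form $\bfw^T\bfE\bfv$ absorbs one power of $\bfF_\sigma$ ``for free'' on each side because $\bfE = (\sigma\bfE - \bfA)\bfF_\sigma$ up to the resolvent, so a chain of length $N$ on the right and $M$ on the left, glued through an $\bfE$, yields derivatives up to order $N + M$ minus a small correction that the leading-resolvent factors restore to exactly $M+N+1$. Getting the index arithmetic exactly right — and verifying the two boundary cases $\ell = M+N$ and $\ell = M+N+1$ explicitly — is the delicate part; everything else is the inductive transfer lemma, which is routine linear algebra once the Petrov--Galerkin identities are written down. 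I would also need the running assumption that $\sigma\bfEr - \bfAr$ is invertible (hypothesized) so that $\bfFr_\sigma$ and the reduced resolvent are well-defined throughout the induction.
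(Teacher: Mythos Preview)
The paper does not prove this theorem; it is quoted from the cited references as a known result, so there is no proof in the paper to compare against.

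Your overall strategy---the inductive ``transfer lemma'' showing that reduced Krylov vectors are the coordinate images of full Krylov vectors, followed by splitting the bitangential moment as an inner product $\bfw_{b}^T\bfE\,\bfv_{a}$ and using $\bfEr=\bfW^T\bfE\bfV$---is exactly the standard argument and is correct in outline. Two concrete corrections, however.

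First, your derivative formula has one resolvent too many. With $R_\sigma=(\sigma\bfE-\bfA)^{-1}$ the correct expression is
\[
\bfG^{(\ell)}(\sigma)\;=\;(-1)^\ell\,\ell!\,\bfC\,(R_\sigma\bfE)^{\ell}R_\sigma\,\bfB,\qquad \ell\geq 1,
\]
not $\bfC R_\sigma(R_\sigma\bfE)^\ell R_\sigma\bfB$. This is harmless to fix.

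Second, and more importantly: your suspicion that the index arithmetic to reach $M+N+1$ is ``delicate'' is well founded, because that bound is a misprint in the statement. The correct upper limit in part~3 is $M+N-1$. You can see this already from the paper's own special case (\ref{eq:tangent}), which is $N=M=1$: there only $\ell=0,1$ match bitangentially, i.e.\ up to $M+N-1=1$, certainly not up to $M+N+1=3$. In general, writing $\bfv_a=(R_\sigma\bfE)^{a-1}R_\sigma\bfB\bfsfb$ for $a\le N$ and $\bfw_b^T=\bfsfc^T\bfC R_\sigma(\bfE R_\sigma)^{b-1}$ for $b\le M$, the identity
\[
\bfsfc^T\bfG^{(\ell)}(\sigma)\bfsfb \;=\; (-1)^\ell\ell!\,\bfw_b^T\bfE\,\bfv_a,\qquad \ell=a+b-1,
\]
together with your transfer lemma on both sides and $\bfEr=\bfW^T\bfE\bfV$, gives the result for all $\ell\le M+N-1$, and no further. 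So do not try to manufacture the two extra orders: the target $M+N+1$ is simply not attainable, and once you replace it by $M+N-1$ your argument goes through without any ``boundary cases'' left to check.
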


One can see that to solve the rational tangential interpolation problem via projection 
all one has to do is to construct the matrices $\bfV$ and $\bfW$ as in 
Theorem~\ref{thm:interpolation_highorder}. The dominant cost is to solve sparse linear systems. 
We also note that in Theorem \ref{thm:interpolation_highorder} the values that are interpolated 
are never explicitly computed. This is crucial since that computation is known to be  poorly conditioned \cite{feldmann1995efficient}. 

To illustrate the result of Theorem~\ref{thm:interpolation_highorder} for a~special case 
of Hermite bi-tangential interpolation, we take the same right and left interpolation points 
$\{\sigma_i\}_{i=1}^r$, left tangential directions $\{ \bfsfc_i\}_{i=1}^r$, and 
right tangential directions $\{ \bfsfb_i\}_{i=1}^r$. 
Then for the projection matrices 
\begin{eqnarray}
\bfV & = &
\left[(\sigma_1\bfE-\bfA)^{-1}\bfB\bfsfb_1,\;\,~\cdots,~\;\,
(\sigma_r\bfE-\bfA)^{-1}\bfB\bfsfb_r\,\right], \label{eqn:Vr} \\
\bfW & = & \left[ (\sigma_1\bfE-\bfA)^{-T}\bfC^T\bfsfc_1,~\cdots,~ 
(\sigma_r\bfE-\bfA)^{-T}\bfC^T \bfsfc_r \right], \label{eqn:Wr} 
\end{eqnarray}
the reduced-order model
$\bfGr(s)= \bfCr (s\bfEr - \bfAr)^{-1}\bfBr+\bfDr$ as in (\ref{red_projection})
satisfies
\begin{equation} \label{eq:tangent}
\bfG(\sigma_i) \bfsfb_i =  \bfGr(\sigma_i) \bfsfb_i,~~~
\bfsfc_i^T \bfG(\sigma_i)  =  \bfsfc_i^T \bfGr(\sigma_i),~~~
\bfsfc_i^T\bfG'(\sigma_i) \bfsfb_i =  \bfsfc_i^T\bfGr'(\sigma_i) \bfsfb_i 
\end{equation}
for $i=1,\cdots,r$, provided $\sigma_i\bfE-\bfA$ and $\sigma_i\bfEr-\bfAr$ are both nonsingular. 

Note that Theorem~\ref{thm:interpolation_highorder} does not distinguish between the singular 
$\bfE$ case
and the standard state space case with $\bfE=\bfI$. In other words, 
the interpolation conditions hold regardless as long as the matrices 
$\sigma_i \bfE - \bfA$ and $\sigma_i \bfEr - \bfAr$ are invertible. This is 
the precise reason why it is often assumed that extending interpolatory-based model 
reduction from  $\bfG(s) = \bfC(s\bfI-\bfA)^{-1}\bfB + \bfD$ to 
$\bfG(s) = \bfC(s\bfE-\bfA)^{-1}\bfB + \bfD$ is as simple as replacing $\bfI$ by $\bfE$. 
However, as the following example shows, this is not the case.

\smallskip
\begin{example}
\label{ex:1}
{\rm
Consider an~RLC circuit modeled by an index-$2$ SISO descriptor system (\ref{dae_fom})
of order $n=765$ (see, e.g., \cite{KunM06} for a~definition of index). 
We approximate this system with a~model (\ref{dae_rom}) of order $r=20$ using 
Hermite interpolation. 
The carefully chosen interpolation points were taken as the mirror images of the 
dominant poles of $\bfG(s)$. 
Since these interpolation points are known to be good points for model reduction 
\cite{gugercin2002projection,gugercin2003anmathcal},
one would expect the interpolant to be a~good approximation as well. 
However, the situation is indeed the opposite. Figure~\ref{fig:simple_example} shows 
the amplitude plots of the frequency responses $\bfG(\imath\omega)$ and 
$\bfGr(\imath\omega)$ (upper plot) and 
that of the error $\bfG(\imath\omega)-\bfGr(\imath\omega)$ (lower plot).
One can see that the error $\bfG(\imath \omega)-\bfGr(\imath \omega)$ 
grows unbounded as the frequency $\omega$ increases, and, 
hence, the approximation is extremely poor with unbounded $\Htwo$ and $\Hinf$ error norms even 
though it satisfies Hermite interpolation at carefully selected effective interpolation points.

\begin{figure}[thpb]
 \centering
\includegraphics[scale=0.45]{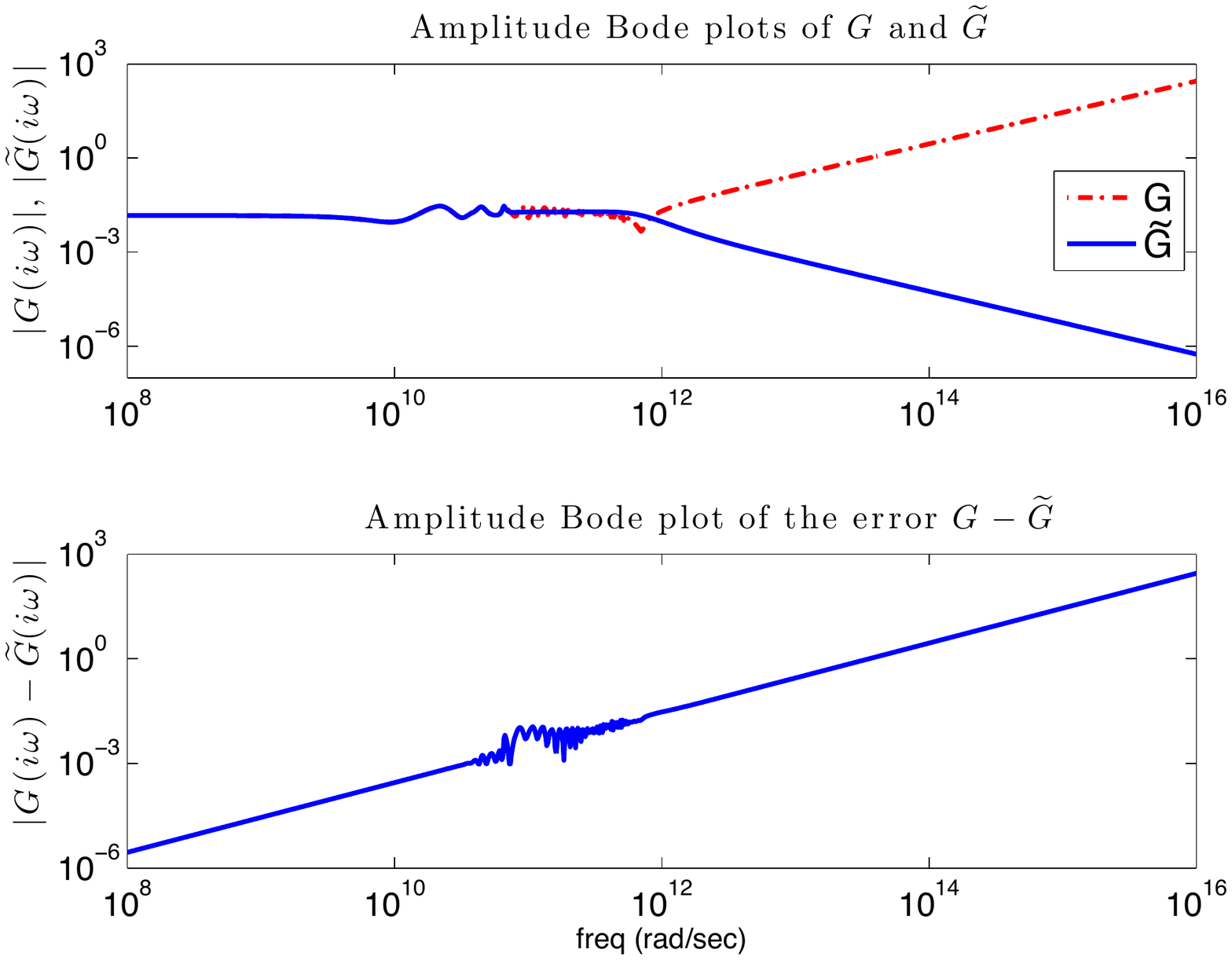} 
  \caption{\small {Example~\ref{ex:1}: amplitude plots of
$\bfG(\imath\omega)$ and $\bfGr(\imath\omega)$ (upper); the absolute error 
$|\bfG(\imath\omega)-\bfGr(\imath\omega)|$ (lower).}}
 \label{fig:simple_example}
 \end{figure}
}
\end{example}

\smallskip
The reason is simple. Even though $\bfE$ is singular, $\bfEr = \bfW ^T\bfE \bfV$
will generically be a~nonsingular matrix assuming $r \leq {\rm rank}(\bfE)$. In this case,
the transfer function $\bfGr(s)$ of the reduced-order model (\ref{dae_rom}) is proper, 
i.e., $\lim\limits_{s\to\infty}\bfGr(s)<\infty$, 
although $\bfG(s)$ might be improper.
Hence, the special care needs to be taken in order to match the polynomial part of $\bfG(s)$. 
We note that the polynomial part of $\bfGr(s)$ has to match that of $\bfG(s)$ {\it exactly}. 
Otherwise, regardless of how good the interpolation points are, the error will always 
grow unbounded. For the very special descriptor systems with the proper transfer functions and only for 
interpolation around $s=\infty$, a solution is offered in \cite{benner2006partial}.
For descriptor systems of index~1,  where the polynomial part of $\bfG(s)$ is 
a~constant matrix, a~remedy is also suggested in \cite{antoulas2010imr} by an~appropriate choice of 
$\bfDr$. However, the general case is remained unsolved. We will 
tackle precisely this problem, where (\ref{dae_fom}) is a descriptor system of higher index, 
its transfer function $\bfG(s)$ may have a~higher order polynomial part and interpolation is at arbitrary 
points in the complex plane. Thereby, the spectral projectors onto the left and right 
deflating subspaces of the pencil $\lambda \bfE-\bfA$ corresponding to finite eigenvalues 
will play a~vital role. Moreover, we will  show how to choose interpolation points and 
tangential directions optimally for interpolatory model reduction of descriptor systems.
 
\section{Interpolatory projection methods for descriptor systems} 
\label{sec:int_dae}

As stated above, in order to have bounded $\Hinf$ and $\Htwo$ errors, 
the polynomial part of $\bfGr(s)$ has to match the polynomial part of $\bfG(s)$ exactly. 
Let $\bfG(s)$ be additively decomposed as 
\begin{equation}
\bfG(s) = \bfGsp(s) + \bfGip(s),
\label{eq:GspP}
\end{equation}
where $\bfGsp(s)$ and $\bfGip(s)$ denote, respectively, the strictly proper part 
and the polynomial part of $\bfG(s)$.
We enforce the reduced-order model $\bfGr(s)$ to have the decomposition
\begin{equation} \label{eq:Gr}
\bfGr(s) = \bfGrsp(s) + \bfGrip(s)
\end{equation}
with $\bfGrip(s) =  \bfGip(s)$. This implies that the error transfer function 
does not contain a~polynomial part, i.e.,
$$
\bfG_{\rm err}(s) = \bfG(s) - \bfGr(s)  =  \bfGsp(s) - \bfGrsp(s)
$$
is strictly proper meaning $\lim\limits_{s\to\infty}\bfG_{\rm err}(s) = 0$.
Hence, by making $\bfGrsp(s)$ to interpolate $\bfGsp(s)$, we will be able 
to enforce that $\bfGr(s)$ interpolates $\bfG(s)$. This will lead to the following 
construction of $\bfGr(s)$. Given $\bfG(s)$, we create $\bfW$ and $\bfV$ satisfying new 
subspace conditions such that the reduced-order model $\bfGr(s)$ obtained   
 by projection as in (\ref{red_projection}) will not only satisfy the interpolation 
conditions but also match the polynomial part of $\bfG(s)$.
 
\begin{theorem} \label{interp_dae}
Given a~full-order model $\bfH(s) = \bfC(s\bfE-\bfA)^{-1}\bfB + \bfD$, define $\bfP_l$ 
and $\bfP_r$ to be the spectral projectors onto the left and right deflating subspaces 
of the pencil $\lambda\bfE-\bfA$ corresponding to the finite eigenvalues. Let the columns 
of $\bfW_\infty$ and $\bfV_\infty$ span the left and right deflating subspaces of 
$\lambda\bfE-\bfA$ corresponding to the eigenvalue at infinity. Let $\sigma$, 
$\mu\in \IC$ be interpolation points 
such that $s \bfE-\bfA$ and $s\bfEr-\bfAr$ are nonsingular for $s=\sigma, \mu$, 
and let $\bfsfb \in \IC^{m}$ and $\bfsfc \in \IC^{p}$. Define $\bfV_{\!f}$ and 
$\bfW_{\!f}$ such that 
\begin{align}
{\rm Im}(\bfV_{\!f}) &= \mbox{\rm span}\left\{
\left((\sigma\bfE-\bfA)^{-1}\bfE\right)^{j-1}(\sigma\bfE-\bfA)^{-1}\bfP_l\bfB \bfsfb, \enskip 
j = 1,...,N\right\}, \label{eq:vf} \\
{\rm Im}(\bfW_{\!f}) &= \mbox{\rm span}\left\{
\left((\mu\bfE-\bfA)^{-T}\bfE^T\right)^{j-1}(\mu\bfE-\bfA)^{-T}\bfP_r^T\bfC^T \bfsfc, \enskip
j = 1,...,M\right\}. \label{eq:wf}
 \end{align}
 Then with the choice of $\bfW =[\,\bfW_{\!f}, \; \bfW_\infty\,]$ and  
$\bfV=[\,\bfV_{\!f},\; \bfV_\infty\,]$, 
the reduced-order model $\widetilde{\bfG}(s) = \bfCr(s\bfEr - \bfAr)^{-1}\bfBr + \bfDr$ obtained 
via projection as in \textup{(\ref{red_projection})} satisfies
\begin{enumerate}
\item $ \widetilde{\bfP}(s) = \bfP(s)$,
\item $\bfH^{(\ell)}(\sigma)\bfsfb = \widetilde{\bfH}^{(\ell)}(\sigma)\bfsfb 
\quad \mbox{ for } \quad \ell = 0,1,\,\ldots,\,N-1,$
\item $\bfsfc^T\bfH^{(\ell)}(\mu) = \bfsfc^T\widetilde{\bfH}^{(\ell)}(\mu) 
\quad \mbox{ for } \quad \ell = 0,1,\,\ldots,\,M-1.$ 
\end{enumerate}
If $\sigma=\mu$, we have, additionally, 
$\bfsfc^T\bfH^{(\ell)}(\sigma)\bfsfb = \bfsfc^T\widetilde{\bfH}^{(\ell)}(\sigma)\bfsfb$
for $\ell = 0,\,\ldots,\,M+N+1.$
\end{theorem}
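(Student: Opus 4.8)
The plan is to reduce everything to the strictly proper part and then invoke Theorem~\ref{thm:interpolation_highorder}. First I would establish the decomposition structure. Using the Weierstrass canonical form of the pencil $\lambda\bfE-\bfA$, I can choose invertible matrices so that $\bfE$ and $\bfA$ are simultaneously block-diagonalized into a ``finite'' block (with $\bfE$ nonsingular there) and an ``infinite'' block (with a nilpotent $\bfE$). The spectral projectors $\bfP_l,\bfP_r$ are exactly the projectors onto the finite blocks, and the columns of $\bfV_\infty,\bfW_\infty$ span the infinite blocks. In these coordinates $\bfGsp(s)$ comes from the finite block and $\bfGip(s)$ from the infinite block.

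The key claim is that the projection with $\bfV=[\,\bfV_{\!f},\,\bfV_\infty\,]$, $\bfW=[\,\bfW_{\!f},\,\bfW_\infty\,]$ separates into two independent subproblems. Because $\mbox{\rm Im}(\bfV_\infty)$ and $\mbox{\rm Im}(\bfW_\infty)$ are the deflating subspaces at infinity, they are $\bfE$- and $\bfA$-invariant in a way that makes $\bfW_\infty^T\bfE\bfV_{\!f}$, $\bfW_{\!f}^T\bfE\bfV_\infty$, $\bfW_\infty^T\bfA\bfV_{\!f}$, $\bfW_{\!f}^T\bfA\bfV_\infty$ all vanish (after one checks that $\mbox{\rm Im}(\bfV_{\!f})\subseteq\mbox{\rm Im}(\bfP_r)$ and $\mbox{\rm Im}(\bfW_{\!f})\subseteq\mbox{\rm Im}(\bfP_r^T)$, which is immediate from the presence of $\bfP_l$, $\bfP_r^T$ in \eqref{eq:vf}--\eqref{eq:wf} together with the identity $(\sigma\bfE-\bfA)^{-1}\bfP_l = \bfP_r(\sigma\bfE-\bfA)^{-1}\bfP_l$). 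Hence $s\bfEr-\bfAr$ is block-diagonal, its ``infinite'' block reproduces the infinite block of $s\bfE-\bfA$ exactly (so $\widetilde{\bfP}(s)=\bfP(s)$, giving item~1), and its ``finite'' block is precisely the reduced pencil one would obtain by applying ordinary interpolatory projection to the strictly proper subsystem $(\bfE_f,\bfA_f,\bfP_l\bfB,\bfC\bfP_r)$ with projection matrices built from \eqref{eq:vf}--\eqref{eq:wf}. Since $\bfC(s\bfE-\bfA)^{-1}\bfP_l\bfB = \bfGsp(s)$ and likewise for the reduced model, Theorem~\ref{thm:interpolation_highorder} applied to this subsystem yields $\bfGsp^{(\ell)}(\sigma)\bfsfb = \bfGrsp^{(\ell)}(\sigma)\bfsfb$ for $\ell=0,\dots,N-1$, the analogous left-tangential statement for $\mu$, and the bi-tangential statement of order $M+N+1$ when $\sigma=\mu$.

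Finally, since $\bfH^{(\ell)}(\sigma) = \bfGsp^{(\ell)}(\sigma) + \bfGip^{(\ell)}(\sigma)$ and $\widetilde\bfH^{(\ell)}(\sigma) = \bfGrsp^{(\ell)}(\sigma) + \bfGrip^{(\ell)}(\sigma)$ with $\bfGrip=\bfGip$ by item~1, the polynomial contributions cancel in the differences, so the interpolation statements for $\bfGsp$ transfer verbatim to $\bfH$, establishing items~2 and~3 and the bi-tangential addendum.

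The main obstacle I expect is the block-decoupling step: one must verify carefully that $\bfW_\infty^T\bfE\bfV_{\!f}=0$, $\bfW_\infty^T\bfA\bfV_{\!f}=0$, and their transposes, and that $\bfW_{\!f}^T\bfE\bfV_{\!f}$, $\bfW_{\!f}^T\bfA\bfV_{\!f}$ coincide with the reduced matrices of the finite subsystem. This hinges on the commutation identities $\bfE\bfP_r=\bfP_l\bfE$, $\bfA\bfP_r=\bfP_l\bfA$, $(\sigma\bfE-\bfA)^{-1}\bfP_l=\bfP_r(\sigma\bfE-\bfA)^{-1}\bfP_l$, and the complementary fact that $\bfW_\infty$, $\bfV_\infty$ annihilate $\bfP_l$, $\bfP_r$ respectively; assembling these cleanly, rather than any single computation, is the delicate part.
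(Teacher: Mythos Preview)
Your plan is essentially identical to the paper's proof: use the Weierstrass canonical form to identify $\bfP_l,\bfP_r$ and $\bfV_\infty,\bfW_\infty$, verify the commutation identities $\bfE\bfP_r=\bfP_l\bfE$, $\bfA\bfP_r=\bfP_l\bfA$ and $(\sigma\bfE-\bfA)^{-1}\bfP_l=\bfP_r(\sigma\bfE-\bfA)^{-1}$ together with $\bfV_{\!f}=\bfP_r\bfV_{\!f}$, $\bfW_{\!f}=\bfP_l^T\bfW_{\!f}$ to make the reduced pencil block-diagonal, read off $\widetilde\bfP(s)=\bfP(s)$ from the infinite block, and then apply Theorem~\ref{thm:interpolation_highorder} to the strictly proper subsystem. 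One small slip to fix: you wrote $\mathrm{Im}(\bfW_{\!f})\subseteq\mathrm{Im}(\bfP_r^T)$, but the correct inclusion is $\mathrm{Im}(\bfW_{\!f})\subseteq\mathrm{Im}(\bfP_l^T)$ (this is what the commutation relation actually gives and what you need for $\bfW_\infty^T\bfE\bfV_{\!f}=\bfW_\infty^T\bfA\bfV_{\!f}=0$).
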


\begin{proof}
Let the pencil $\lambda \bfE -\bfA$ be transformed into the Weierstrass canonical form
\begin{equation}
\bfE= \bfS\left[\begin{array}{cc} \bfI_{n_f} & \mathbf{0} \\ \mathbf{0} & \bfN \end{array}\right]\bfT^{-1}, \qquad 
\bfA= \bfS\left[\begin{array}{cc} \bfJ & \mathbf{0} \\ \mathbf{0} & \bfI_{n_\infty} \end{array}\right]\bfT^{-1},
\label{eq:WCF}
\end{equation}
where $\bfS$ and $\bfT$ are nonsingular and $\bfN$ is nilpotent. Then the projectors
$\bfP_l$ and $\bfP_r$ can be represented as
\begin{eqnarray} \label{pil_pir}
 \bfP_l = \bfS
\left[
\begin{array}{cc}
 \bfI_{n_f} & \mathbf{0}     \\
  \mathbf{0} &   \mathbf{0} \\
\end{array}
\right]\bfS^{-1}, \qquad
\bfP_r = \bfT\left[
\begin{array}{cc}
 \bfI_{n_f} & \mathbf{0}     \\
  \mathbf{0} &   \mathbf{0} \\
\end{array}
\right]\bfT^{-1}.
\end{eqnarray}
Let $\bfT = [\,\bfT_1,\, \bfT_2\,]$ and $\bfS^{-1} = [\,\bfS_1,\, \bfS_2\,]^T$ be partitioned 
according to $\bfE$ and $\bfA$ in (\ref{eq:WCF}). Then the matrices $\bfW_\infty$ and $\bfV_\infty$
take the form
$$
\bfW_\infty=\bfS_2\bfR_S=(\bfI-\bfP_l^T)\bfW_\infty, \qquad 
\bfV_\infty=\bfT_2\bfR_T=(\bfI-\bfP_r)\bfV_\infty
$$
with nonsingular $\bfR_S$ and $\bfR_T$. Furthermore, the strictly proper and 
polynomial parts of $\bfH(s)$ in (\ref{eq:GspP}) are given by
$$
\arraycolsep=2pt
\begin{array}{rcl}
\bfH_{\rm sp}(s) & = & \bfC\bfT_1(s\bfI_{n_f}-\bfJ)^{-1}\bfS_1^T\bfB, \\[2mm]
{\rm and}~~~\bfP(s) & = & \bfC\bfT_2(s\bfN-\bfI_{n_\infty})^{-1}\bfS_2^T\bfB+\bfD,
\end{array}
$$
respectively. It follows from (\ref{eq:WCF}) and (\ref{pil_pir}) that 
$$
\begin{array}{l}
\bfE\bfP_r=\bfP_l\bfE, \qquad  \bfA\bfP_r=\bfP_l\bfA, \\
(s\bfE-\bfA)^{-1}\bfP_l=\bfP_r(s\bfE-\bfA)^{-1}, 
\end{array}
$$
and, hence, 
\begin{equation}
\bfW_{\!f}=\bfP_l^T \bfW_{\!f} \qquad{\rm and}\qquad \bfV_{\!f}=\bfP_r \bfV_{\!f}.
\label{spectral_prop2}
\end{equation}
Then the system matrices of the reduced-order model have the form
\begin{align*}
\bfEr =  \bfW^T\bfE\bfV=\left[\begin{array}{cc} 
\bfW_{\!f}^T\bfE\bfV_{\!f}^{} & \bfW_{\!f}^T\bfE\bfV_\infty^{} \\[2mm]
\bfW^T_\infty\bfE\bfV_{\!f}^{} & \bfW^T_\infty\bfE\bfV_\infty^{}\end{array}\right] =
\left[\begin{array}{cc} \bfW_{\!f}^T\bfE\bfV_{\!f}^{} & \mathbf{0} \\[2mm]
\mathbf{0} & \bfW^T_\infty\bfE\bfV_\infty^{}\end{array}\right], \\
\bfAr  =  \bfW^T\bfA\bfV=\left[\begin{array}{cc} 
\bfW_{\!f}^T\bfA\bfV_{\!f}^{} & \bfW_{\!f}^T\bfA\bfV_\infty^{} \\[2mm]
\bfW^T_\infty\bfA\bfV_{\!f}^{} & \bfW^T_\infty\bfA\bfV_\infty^{}\end{array}\right]=
\left[\begin{array}{cc} 
\bfW_{\!f}^T\bfA\bfV_{\!f}^{} & \mathbf{0} \\[2mm]
\mathbf{0} & \bfW^T_\infty\bfA\bfV_\infty^{}\end{array}\right], \\
\bfBr =  \bfW^T\bfB = \left[\begin{array}{c} 
\bfW_{\!f}^T\bfB \\[2mm] \bfW^T_\infty\bfB \end{array}\right], \qquad
\bfCr = \bfC\bfV = [\,\bfC\bfV_{\!f} ,\;\bfC\bfV_\infty \,], \qquad \bfDr=\bfD.\enskip
\end{align*}
Thus, the strictly proper and polynomial parts of $\widetilde{\bfH}(s)$ are given by
$$
\arraycolsep=2pt
\begin{array}{rcl}
\widetilde{\bfG}_{\rm sp}(s) & = & \bfC\bfV_{\!f}(s\bfW_{\!f}^T\bfE\bfV_{\!f}^{}-
\bfW_{\!f}^T\bfA\bfV_{\!f}^{})^{-1}\bfW_{\!f}^T\bfB, \\[2mm]
\widetilde{\bfP}(s) & = & \bfC\bfV_\infty(s\bfW_\infty^T\bfE\bfV_\infty^{}-
\bfW_\infty^T\bfA\bfV_\infty^{})^{-1}\bfW_\infty^T\bfB+\bfD \\[2mm]
& = & \bfC\bfT_2(s\bf I-\bfJ)^{-1}\bfS_2^T\bfB+\bfD = \bfP(s).
\end{array}
$$
One can see that the polynomial parts of $\bfH(s)$ and $\widetilde{\bfH}(s)$ coincide, 
and the proof of the interpolation result reduces to proving 
the interpolation conditions for the strictly proper parts of $\bfG(s)$ and $\bfGr(s)$.
To prove this, we first note that (\ref{eq:WCF}) and (\ref{pil_pir}) imply that
$$
\begin{array}{rcl}
 \bfC\bfP_r(\sigma\bfE-\bfA)^{-1}\bfP_l\bfB  &\!\! = \!\!& \bfC\bfT\left[
\begin{array}{cc}
 \bfI & \mathbf{0}     \\
  \mathbf{0} &   \mathbf{0} \\
\end{array}
\right]\left[\begin{array}{cc} \sigma\bfI-\bfJ & \mathbf{0} \\ 
\mathbf{0} & \sigma\bfN-\bfI \end{array}\right]^{-1}\left[
\begin{array}{cc}
 \bfI & \mathbf{0}     \\
  \mathbf{0} &   \mathbf{0} \\
\end{array}
\right]\bfS^{-1}\bfB\\[5mm]
&\!\! =\!\! & \bfC\bfT_1(\sigma\bfI-\bfJ)^{-1}\bfS_1^T\bfB = \bfG_{\rm sp}(\sigma).
\end{array}
$$
Furthermore, it follows from the relations (\ref{spectral_prop2}) that
$$
\bfC\bfP_r\bfV_{\!f} = \bfC\bfV_{\!f},  \qquad 
\bfW_{\!f}^T\bfP_l\bfB = \bfW_{\!f}^T\bfB.
$$
Due to the definitions of $\bfV_{\!f}$ and $\bfW_{\!f}$ in (\ref{eq:vf}) and 
 (\ref{eq:wf}), respectively, Theorem~\ref{thm:interpolation_highorder} gives
$$
\arraycolsep=2pt
\begin{array}{rcl}
\bfG_{\rm sp}(\sigma)\bfsfb & = & \bfC\bfP_r\bfV_{\!f}
(\sigma\bfW_{\!f}^T\bfE\bfV_{\!f} - \bfW_{\!f}^T\bfA\bfV_{\!f}^{})^{-1}\bfW_{\!f}^T\bfP_l\bfB\bfsfb=
\widetilde{\bfG}_{\rm sp}(\sigma)\bfsfb,\\
\bfsfc^T\bfG_{\rm sp}(\mu) & = & \bfsfc^T\bfC\bfP_r\bfV_{\!f}
(\mu\bfW_{\!f}^T\bfE\bfV_{\!f} - \bfW_{\!f}^T\bfA\bfV_{\!f}^{})^{-1}\bfW_{\!f}^T\bfP_l\bfB = \bfsfc^T\widetilde{\bfG}_{\rm sp}(\mu).
\end{array}
$$ 
Since both parts $1$ and $2$ of Theorem \ref{thm:interpolation_highorder} hold, we have  
$\bfsfc^T\widetilde{\bfG}_{\rm sp}'(\sigma)\bfsfb = \bfsfc^T\bfG_{\rm sp}'(\sigma)\bfsfb$ for 
 $\sigma=\mu$. The other interpolatory relations for the derivatives of the transfer function
can be proved analogously.
\end{proof} 

\medskip
Next, we illustrate that even though Theorem~\ref{interp_dae} 
has a~very similar structure to that of Theorem~\ref{thm:interpolation_highorder}, 
the saddle difference between these two results makes a~big dif\-fe\-rence in the 
resulting reduced-order model. Towards this goal, we revisit Example~\ref{ex:1}. 
We reduce the same full-order model using the same interpolation points,
but imposing the subspace conditions of 
Theorem~\ref{interp_dae}, instead.  Figure \ref{fig:simple_example_revised} 
depicts the resulting amplitude plots of $\bfG(\imath\omega)$ and $\bfGr(\imath\omega)$ 
(upper plot) and that of the error $\bfG(\imath\omega)-\bfGr(\imath\omega)$ (lower plot) 
when the new subspace conditions of Theorem~\ref{interp_dae} are used. 
Unlike the case in Example~\ref{ex:1}, where the error $\bfG(\imath \omega)-\bfGr(\imath \omega)$ 
grew unbounded, for the new reduced-order model, the maximum error is below $10^{-2}$ and the error decays 
to zero as $\omega$ approaches $\infty$, since the polynomial part is captured exactly.
  \begin{figure}[thpb]
 \centering
\includegraphics[scale=0.5]{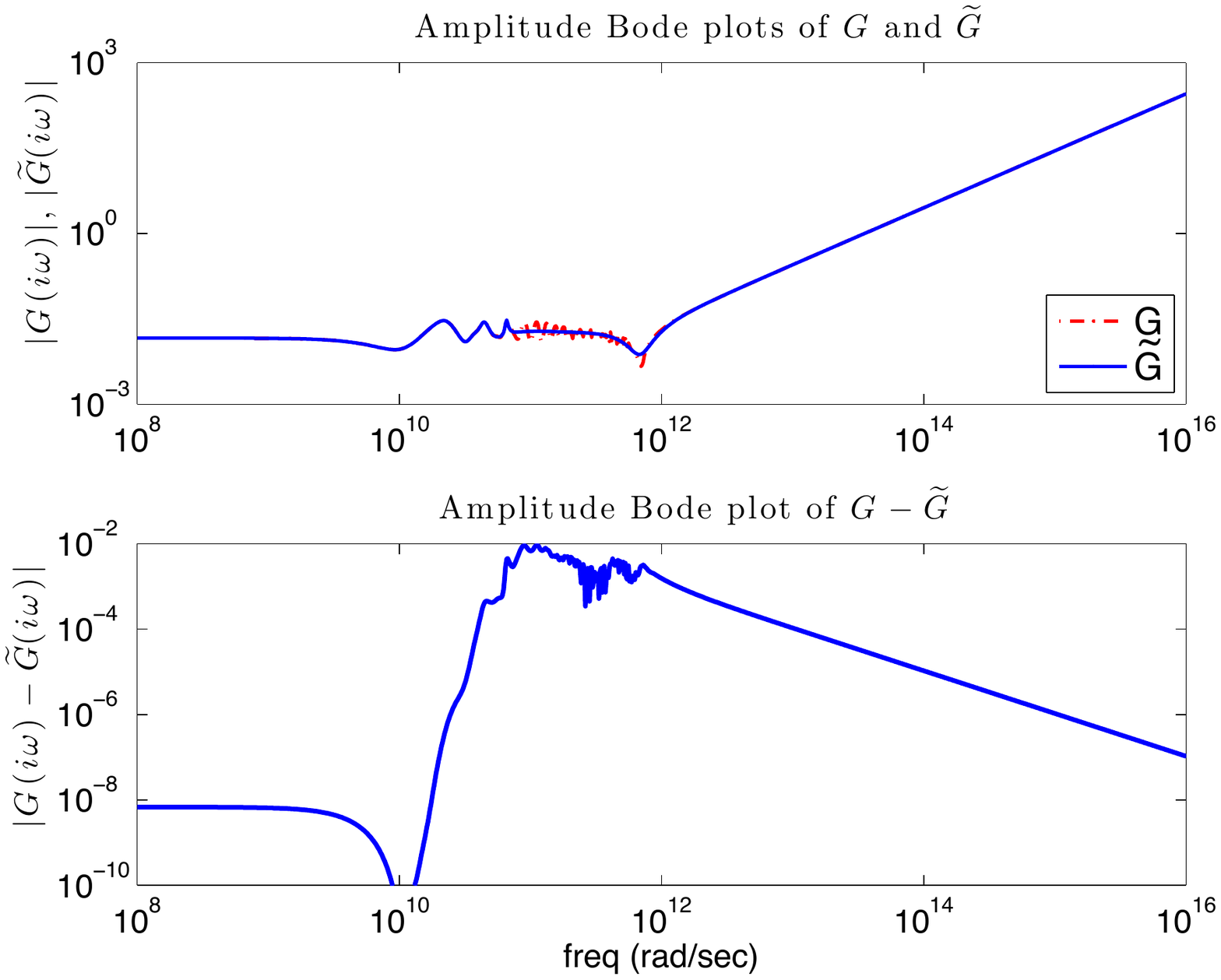} 
  \caption{\small {Amplitude plots of
$\bfG(\imath\omega)$ and $\bfGr(\imath\omega)$ (upper); 
the absolute error $|\bfG(\imath\omega)-\bfGr(\imath\omega)|$ (lower).}}
 \label{fig:simple_example_revised}
 \end{figure}
 
\smallskip
In some applications, the deflating subspaces of $\lambda \bfE-\bfA$ corresponding to the eigenvalues 
at infinity may have large dimension $n_\infty$. However, the order of the system can still
be reduced if it contains states that are uncontrollable and unobservable at infinity. 
Such states can be removed from the system without changing its transfer function 
and, hence, preserving the interpolation conditions as in Theorem~\ref{interp_dae}.
In this case the projection matrices $\bfW_\infty$ and $\bfV_\infty$ can be determined
as proposed in \cite{stykel2004gramian} by solving the projected discrete-time Lyapunov equations
\begin{eqnarray}
& & \bfA\bfX\bfA^T-\bfE\bfX\bfE^T\!=-(\bfI-\bfP_l)\bfB\bfB^T\!(\bfI-\bfP_l)^T, \;
\,\bfX=(\bfI-\bfP_r)\bfX(\bfI-\bfP_r)^T, \enskip\label{eq:GCALEc} \\
& & \bfA^T\bfY\bfA-\bfE^T\bfY\bfE=-(\bfI-\bfP_r)^T\bfC^T\!\bfC(\bfI-\bfP_r), \;
\bfY=(\bfI-\bfP_l)^T\bfY(\bfI-\bfP_l).\enskip \label{eq:GCALEo}
\end{eqnarray}
Let $\bfX_C$ and $\bfY_C$ be the Cholesky factors of $\bfX=\bfX_C^{}\bfX_C^T$ and 
$\bfY=\bfY_C^{}\bfY_C^T$, respectively, and let
$\bfY_C^T\bfA\bfX_C^{}=[\bfU_1,\,\bfU_0]\mbox{\rm diag}(\mathbf{\Sigma}, \mathbf{0})[\bfV_1,\,\bfV_0]^T$
be singular value decomposition, where $[\bfU_1,\,\bfU_0]$ and $[\bfV_1,\,\bfV_0]$ are orthogonal 
and $\mathbf{\Sigma}$ is nonsingular. Then the projection matrices 
$\bfW_\infty$ and $\bfV_\infty$ can be taken as $\bfW_\infty=\bfY_C\bfU_1$ and 
$\bfV_\infty=\bfX_C\bfV_1$. Note that the Cholesky factors $\bfX_C$ and $\bfY_C$ can be computed directly using the generalized Smith method \cite{Styk08}. In this method, it is required to solve $\nu-1$ linear systems only, where
$\nu$ is the index of the pencil $\lambda \bfE-\bfA$ or, equivalently, the nilpotence index of $\bfN$
in (\ref{eq:WCF}). The computation of the projectors $\bfP_l$ and $\bfP_r$ is, in general, 
a~difficult problem. However, for some structured problems arising in circuit simulation, multibody systems
and computational fluid dynamics, these projectors can be constructed in explicit form that significantly reduces the computational complexity of the method; see \cite{Styk08} for details. 

\section{Interpolatory optimal $\Htwo$ model reduction for descriptor systems}
\label{sec:optinterp}

The choice of interpolation points and tangential directions is the central issue
in interpolatory model reduction. This choice determines whether the reduced-order model 
is high fidelity or not. Until recently, selection of interpolation points  was largely 
\emph{ad hoc} and required several model reduction attempts to arrive at a~reasonable 
approximation. However, Gugercin {\it et al.} \cite{gugercin2008hmr} introduced 
an~interpolatory model reduction method for generating a~reduced model $\bfGr$ of 
order $r$ which is an~optimal ${\mathcal H}_2$ appro\-xi\-ma\-tion
to the original system $\bfG$ in the sense that it minimizes $\Htwo$-norm error, i.e.,
\begin{equation} \label{eq:h2prob}
 \displaystyle  \| \bfG - \bfGr \|_{\Htwo} = \min_{ {\small
\dim(\bfGr_r)=r}} \| \bfG - \bfGr_{r} \|_{\Htwo},
\end{equation}
where
\begin{equation} \label{eq:h2norm}
\left\| \bfG \right\|_{\Htwo} := \left(\frac{1}{2\pi}\int_{-\infty}^{+\infty}
\| \bfG(\imath \omega) \|_{\rm F}^2\, d\omega\right)^{1/2}
\end{equation}
and $\|\cdot \|_{\rm F}$ denotes the Frobenius matrix norm.
Since this is a~non-convex optimization problem, the computation of a~global minimizer 
is a~very difficult task. Hence, instead, one tries to find high-fidelity reduced models 
that satisfy first-order necessary optimality conditions. There exist, in general, two approaches 
for solving this problem. These are Lyapunov-based optimal $\Htwo$ methods presented in
\cite{halevi1992fwm,hyland1985theoptimal,spanos1992anewalgorithm,wilson1970optimum,
yan1999anapproximate,zigic1993contragredient} 
and interpolation-based optimal $\Htwo$ methods considered in
\cite{beattie2007kbm,beattie2009trm,bunse-gerstner2009hom,gugercin2005irk,gugercin2006rki,
gugercin2008hmr,kubalinska2007h0i,meieriii1967approximation,vandooren2008hom}. 
While the Lyapunov-based approaches require solving a~series of Lyapunov equations, 
which becomes costly and sometimes intractable in large-scale settings, 
the interpolatory approaches only require solving a~series of sparse linear systems 
and have proved to be numerically very effective.
Moreover, as shown in \cite{gugercin2008hmr},
both frameworks are theoretically equivalent that further motivates the usage of 
interpolatory model reduction techniques for the optimal $\Htwo$ approximation.

For SISO systems, interpolation-based $\Htwo$ optimality conditions were originally 
developed by Meier and Luenberger \cite{meieriii1967approximation}.  Then, based on these conditions,
an effective algorithm for interpolatory optimal $\Htwo$ approximation, called the 
\textit{Iterative Rational Krylov Algorithm} (IRKA), was introduced in \cite{gugercin2005irk, gugercin2006rki}.
This algorithm has also been recently extended to MIMO systems 
using the tangential interpolation framework, see 
\mbox{\cite{bunse-gerstner2009hom,gugercin2008hmr,vandooren2008hom}}  for more details.

The model reduction methods mentioned above, however, only deals with the system 
\mbox{$\bfG(s) = \bfC(s\bfE - \bfA)^{-1}\bfB + \bfD$} 
with a~nonsingular matrix $\bfE$. In this section, 
we will extend IRKA to descriptor systems. 
First, we establish the interpolatory $\Htwo$ optimality conditions in the new setting.

\begin{theorem}   \label{thm:h2cond}
Let $\bfG(s) = \bfGsp(s) + \bfGip(s)$ be decomposed into the strictly proper and polynomial parts, 
and let $\bfGr(s) =  \bfGrsp(s) + \bfGrip(s)$
have an $r^{\rm th}$-order strictly proper part $\bfGrsp(s)=\bfCr_{\rm sp}(s \bfEr_{\rm sp} - \bfAr_{\rm sp})^{-1}\bfBr_{\rm sp}$.
\begin{enumerate}
\item If $\bfGr(s)$ minimizes the $\Htwo$-error  $\|\bfG-\bfGr\|_{\mathcal{H}_{2}}$
over all reduced-order models with an $r^{\rm th}$-order strictly proper part, then
$\bfGrip(s) = \bfGip(s)$ and $\bfGrsp(s)$ minimizes the $\Htwo$-error $\|\bfGsp-\bfGrsp\|_{\mathcal{H}_{2}}$.
\item Suppose that the pencil $s\bfErsp-\bfArsp$  has distinct eigenvalues
$\{\widetilde{\lambda}_i\}_{i=1}^r$.
Let $\bfy_i$ and $\bfz_i$ denote the left and right eigenvectors associated with 
$\tilde{\lambda}_i$ so that \mbox{$\bfArsp \bfz_i =\!\widetilde{\lambda}_i\bfErsp\bfz_i$}, 
$\bfy_i^*\bfArsp = \!\widetilde{\lambda}_i\bfy_i^*\bfErsp$, and 
$\bfy_i^*\bfErsp \bfz_j = \delta_{ij}$. 
Then for \mbox{$\bfsfc_i= \bfCrsp \bfz_i$} and $\bfsfb_i^T = \bfy_i^*\bfBrsp$, we have
\begin{equation}\label{eq:rom}
\begin{array}{c}
\bfG(-\widetilde{\lambda}_i) \bfsfb_i =
\bfGr(-\widetilde{\lambda}_i) \bfsfb_i, \qquad 
\bfsfc_i^T\bfG(-\widetilde{\lambda}_i) =
\bfsfc_i^T \bfGr (-\widetilde{\lambda}_i), \\[1mm]
\mbox{and} \quad \bfsfc_i ^T\bfG'(-\widetilde{\lambda}_i) \bfsfb_i =
\bfsfc_i ^T \bfGr'(-\widetilde{\lambda}_i) \bfsfb_i\quad\mbox{for}\enskip i=1,\cdots,r. 
\end{array}
\end{equation}
\end{enumerate}
\end{theorem}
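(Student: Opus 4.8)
The plan is to establish part~1 first and then deduce part~2 from the interpolatory $\Htwo$ optimality theory for standard systems. For part~1, I would write the error as $\bfG-\bfGr = (\bfGsp-\bfGrsp) + (\bfGip-\bfGrip)$, noting that $\bfGsp-\bfGrsp$ is strictly proper while $\bfGip-\bfGrip$ is polynomial, so these are exactly the strictly proper and polynomial parts of the error. The crucial observation is that a transfer function with a nonzero polynomial part has infinite $\Htwo$-norm: if $\bfGip-\bfGrip\neq 0$, then $\|\bfG(\imath\omega)-\bfGr(\imath\omega)\|_{\rm F}$ does not tend to $0$ as $\omega\to\pm\infty$, so the integral in~(\ref{eq:h2norm}) diverges. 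Since the polynomial part $\bfGrip$ of a reduced-order model may be chosen independently of its $r$-th order strictly proper part $\bfGrsp$, and since the choice $\bfGrip=\bfGip$ together with any stable $\bfGrsp$ already yields a finite error (using the assumption, implicit in the $\Htwo$ setting, that $\bfGsp$ is stable), every minimizer must satisfy $\bfGrip=\bfGip$. With this choice the polynomial parts cancel, so $\|\bfG-\bfGr\|_{\Htwo}=\|\bfGsp-\bfGrsp\|_{\Htwo}$, and minimizing the left-hand side over $\bfGr$ reduces to minimizing $\|\bfGsp-\bfGrsp\|_{\Htwo}$ over $\bfGrsp$.

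For part~2, by part~1 the map $\bfGrsp(s)=\bfCrsp(s\bfErsp-\bfArsp)^{-1}\bfBrsp$ is an optimal $\Htwo$ approximation of the \emph{standard} strictly proper system $\bfGsp(s)=\bfC\bfT_1(s\bfI_{n_f}-\bfJ)^{-1}\bfS_1^T\bfB$ (the realization of $\bfGsp$ extracted from the Weierstrass form in the proof of Theorem~\ref{interp_dae}). Using distinctness of the $\widetilde{\lambda}_i$ and the normalization $\bfy_i^*\bfErsp\bfz_j=\delta_{ij}$, I would expand $\bfGrsp$ in partial fractions as $\bfGrsp(s)=\sum_{i=1}^r \bfsfc_i\bfsfb_i^T/(s-\widetilde{\lambda}_i)$ with $\bfsfc_i=\bfCrsp\bfz_i$ and $\bfsfb_i^T=\bfy_i^*\bfBrsp$. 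The first-order necessary conditions for $\Htwo$-optimality of a MIMO reduced model in this pole--residue form, established in~\cite{gugercin2008hmr}, assert precisely that $\bfGsp$ and $\bfGrsp$ satisfy bitangential Hermite interpolation at the reflected points $-\widetilde{\lambda}_i$ along the directions $\bfsfb_i,\bfsfc_i$:
\[
\bfGsp(-\widetilde{\lambda}_i)\bfsfb_i = \bfGrsp(-\widetilde{\lambda}_i)\bfsfb_i,\quad
\bfsfc_i^T\bfGsp(-\widetilde{\lambda}_i) = \bfsfc_i^T\bfGrsp(-\widetilde{\lambda}_i),\quad
\bfsfc_i^T\bfGsp'(-\widetilde{\lambda}_i)\bfsfb_i = \bfsfc_i^T\bfGrsp'(-\widetilde{\lambda}_i)\bfsfb_i.
\]

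It then remains to transfer these identities from the strictly proper parts to the full transfer functions. Since $\bfGrip=\bfGip$ by part~1 and the polynomial part is entire, and since $-\widetilde{\lambda}_i$ is a pole of neither $\bfGsp$ nor $\bfGrsp$ (the $\widetilde{\lambda}_i$ lie in the open left half-plane because $\bfGrsp$ must be stable, hence their reflections lie in the open right half-plane, where the stable $\bfGsp$ has no poles), we get $\bfG^{(\ell)}(-\widetilde{\lambda}_i)-\bfGr^{(\ell)}(-\widetilde{\lambda}_i)=\bfGsp^{(\ell)}(-\widetilde{\lambda}_i)-\bfGrsp^{(\ell)}(-\widetilde{\lambda}_i)$ for $\ell=0,1$, because the common polynomial part cancels together with its derivative. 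Combining this with the interpolation identities above yields~(\ref{eq:rom}).

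The genuinely new content is part~1: showing that the polynomial part of any minimizer is forced to equal $\bfGip$ and that the optimization thereby decouples into an unconstrained match of the polynomial parts plus a standard $\Htwo$ optimization of the strictly proper parts. Part~2 is then a reduction to the standard-system theory of~\cite{gugercin2008hmr} followed by the routine cancellation of the matching polynomial parts. The point that needs care throughout is the stability hypothesis on $\bfGsp$ (and on the reduced model), without which the $\Htwo$-error is infinite and the minimization is vacuous; this is why the argument of part~1 must be phrased as "finite error is achievable, so a minimizer has finite error, so its polynomial part matches."
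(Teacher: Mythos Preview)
Your proposal is correct and follows essentially the same approach as the paper: argue that a nonmatching polynomial part forces an infinite $\Htwo$-error so a minimizer must satisfy $\bfGrip=\bfGip$, reduce to the standard $\Htwo$ optimality theory for $\bfGsp$ via \cite{gugercin2008hmr}, and then lift the resulting bitangential Hermite conditions to $\bfG$ and $\bfGr$ using the cancellation of the common polynomial part. Your write-up is in fact more explicit than the paper's on two points the paper leaves implicit---the ``finite error is achievable, hence any minimizer has finite error'' logic in part~1, and the verification that $-\widetilde{\lambda}_i$ is not a pole of $\bfGsp$ or $\bfGrsp$ in part~2---but these are elaborations of the same argument, not a different route.
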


\begin{proof}
1. The polynomial part of $\bfG(s)$ and $\bfGr(s)$ coincide, since, otherwise, 
the $\Htwo$-norm of the error $\bfG(s)-\bfGr(s)$ would be unbounded. Then it readily follows that 
$\bfGrsp(s)$ minimizes $\|\bfGsp(s)-\bfGrsp(s)\|_{\mathcal{H}_{2}}$ since
$\bfG(s) - \bfGr(s) = \bfGsp(s) - \bfGrsp(s)$. 

2. Since $\bfGrip(s) = \bfGip(s)$, the $\Htwo$ optimal model reduction 
problem for $\bfG(s)$ now reduces to the $\Htwo$ optimal problem for the strictly proper
transfer function $\bfGsp(s)$. Hence, the optimal $\Htwo$ conditions 
of \cite{gugercin2008hmr} require that $\bfGrsp(s)$ needs to be a~bi-tan\-gen\-tial Hermite 
interpolant to $\bfGsp(s)$ with $\{-\widetilde{\lambda}_i\}_{i=1}^r$ being the interpolation points, and 
$\{\bfsfc_i\}_{i=1}^r$ and $\{\bfsfb_i\}_{i=1}^r$ being 
the corresponding left and right 
tangential  directions, respectively. Thus, the interpolation conditions (\ref{eq:rom}) hold since $\bfGrip(s) = \bfGip(s)$.
\end{proof}

Unfortunately, the $\Htwo$ optimal interpolation points and associated tangent directions
are not known {\it a priori}, since they depend on the reduced-order model to be computed. 
To overcome this difficulty, an~iterative algorithm IRKA was developed \cite{gugercin2005irk, gugercin2006rki} which is based 
on successive substitution. 
In IRKA,  the interpolation points are corrected iteratively by the choosing
mirror images of poles of the current reduced-order model as the next interpolation points.
The tangential directions are corrected in a similar way; see \cite{antoulas2010imr,gugercin2008hmr}
for details.
 
The  situation in the case of descriptor systems is similar, where the optimal interpolation 
points and the corresponding tangential directions depend on the strictly proper part 
of the reduced-order model to be computed. Moreover, we need to make sure that the final 
reduced-model has the same polynomial part as the original one. Hence, we will modify IRKA 
to meet these challenges. In particular, we will correct not the poles and the tangential directions 
of the intermediate reduced-order model at the successive iteration step but that of 
the strictly proper part of the intermediate reduced-order model. As in the case of Theorem~\ref{interp_dae}, the spectral projectors 
$\bfP_l$ and $\bfP_r$ will be used to construct the required interpolatory subspaces. 
A~sketch of the resulting model reduction method is given in Algorithm~\ref{alg:irka_dae}.

\begin{figure}[ht]
\centering
    \framebox[5.0in][t]{
    \begin{minipage}[c]{4.95in}
    \begin{algorithm} \label{alg:irka_dae}
    {\bf  Interpolatory $\Htwo$ optimal model reduction method \\ \hspace*{28mm}for descriptor systems}
    
\begin{enumerate}[{\rm 1)}]
\item Make an initial selection of the interpolation points  $\{\sigma_i\}_{i=1}^r$ and the \\
tangential directions  
$\{\bfsfb_i\}_{i=1}^r$ and  $\{\bfsfc_i\}_{i=1}^r$.

\item  $\;\bfV_{\!f} =  \left[\,(\sigma_1 \bfE- \bfA)^{-1}\bfP_l\bfB\bfsfb_1,\;\ldots,\; 
(\sigma_r \bfE - \bfA)^{-1}\bfP_l\bfB\bfsfb_r\,\right]$,\\[1mm]
   $\bfW_{\!f} =  \left[\,({\sigma_1}\bfE - \bfA)^{-T}\bfP_r^T\bfC^T\bfsfc_1,\; 
\ldots, \; ({\sigma_r} \bfE - \bfA)^{-T}\bfP_r^T\bfC^T\bfsfc_r\,\right]$.       
\item while (not converged)

       \begin{enumerate}[{\rm a)}]
       \item $\bfArsp = \bfW_{\!f}^T \bfA \bfV_{\!f}$, $\bfErsp = \bfW_{\!f}^T \bfE \bfV_{\!f}$,
        $\bfBrsp = \bfW_{\!f}^T\bfB$, and $\bfCrsp = \bfC \bfV_{\!f}$. 
       
       \item Compute  $\bfArsp \bfz_i = \widetilde{\lambda}_i\bfErsp\bfz_i$ and 
       $\bfy_i^*\bfArsp = \widetilde{\lambda}_i\bfy_i^*\bfErsp$ with 
       $\bfy_i^*\bfErsp \bfz_j = \delta_{ij}$, \\
        where $\bfy_i$ and $\bfz_i$ are left and right eigenvectors associated with  $\widetilde{\lambda}_i$.
        
\item $\sigma_i \leftarrow -\widetilde{\lambda}_i$,  $\bfsfb_i^T \leftarrow \bfy_i^*\bfBrsp$ and
 $\bfsfc_i \leftarrow \bfCrsp \bfz_i$ for $i=1,\ldots,r$.
 
   \item  $\;\bfV_{\!f} =  \left[\,(\sigma_1 \bfE- \bfA)^{-1}\bfP_l\bfB\bfsfb_1,\;\ldots,\; 
(\sigma_r \bfE - \bfA)^{-1}\bfP_l\bfB\bfsfb_r\,\right]$,\\[1mm]
   $\bfW_{\!f} =  \left[\,({\sigma_1} \bfE - \bfA)^{-T}\bfP_r^T\bfC^T\bfsfc_1,\;\ldots, \; 
   ({\sigma_r}\bfE - \bfA)^{-T}\bfP_r^T\bfC^T\bfsfc_r\,\right]$.
          \end{enumerate}
          end while
\item Compute $\bfW_\infty$ and $\bfV_\infty$ such that
$\mbox{\rm Im}(\bfW_\infty)=\mbox{\rm Im}(\bfI-\bfP_l^T)$ and \\
$\mbox{\rm Im}(\bfV_\infty)=\mbox{\rm Im}(\bfI-\bfP_r)$.

\item Set $\bfV = [\,\bfV_{\!f},\;\bfV_\infty\,]$ and $\bfW = [\,\bfW_{\!f},\;\bfW_\infty\,]$.

\item $\bfEr = \bfW^T \bfE \bfV$, $\bfAr = \bfW^T \bfA \bfV$, $\bfBr = \bfW^T \bfB$, 
$\bfCr = \bfC\bfV$,
$\bfDr = \bfD$.
\end{enumerate}
\end{algorithm}
    \end{minipage}
}
  \end{figure}
   
Note that until Step~4 of Algorithm~\ref{alg:irka_dae}, the polynomial part is not included since 
the interpolation parameters result from the strictly proper part $\bfGrsp(s)$. In a~sense,
Step~3 runs the optimal $\Htwo$ iteration on $\bfGsp(s)$. 
Hence, at the end of Step~3, we construct an optimal $\Htwo$ interpolant to $\bfGsp(s)$.
However, in Step~5, we append the interpolatory subspaces with $\bfV_\infty$ 
and $\bfW_\infty$ (which can be computed as described at the end of Section~\ref{sec:int_dae}) so that the final reduced-order model in Step~6 has the same polynomial
part as $\bfG(s)$, and, consequently, the final reduced-order model $\bfGr(s)$ satisfies the
optimality conditions of Theorem~\ref{thm:h2cond}.  One can see 
this from Step~3c: upon convergence, the interpolation points are the mirror images of 
the poles of $\bfGrsp(s)$ and the tangential directions are the residue directions from 
$\bfGrsp(s)$  as the optimality conditions require. Since Algorithm~\ref{alg:irka_dae} uses 
the projected quantities $\bfP_l\bfB$
and $\bfC\bfP_r$, theoretically iterating on a strictly proper dynamical system, the convergence behavior of this algorithm 
will follow the same pattern of IRKA which has been observed to converge rapidly in numerous numerical applications. 

Summarizing, we have shown so far how to reduce descriptor systems such that
the transfer function of the reduced descriptor systems is a~tangential interpolant 
to the original one and matches the polynomial part preventing unbounded $\Hinf$ and 
$\Htwo$ error norms. However, this model reduction approach involves the explicit computation 
of the spectral projectors or the corresponding deflating subspaces, which could be numerically infeasible 
for general large-scale problems. In the next two sections, 
we will show that for certain important classes of descriptor systems, 
the same can be achieved without explicitly forming the spectral projectors.

  \section{Semi-explicit descriptor systems of index~1} 
  \label{sec:index1}

We consider the following semi-explicit descriptor system
\begin{equation}   \label{smile}
  \arraycolsep=2pt
\begin{array}{rcl}
\bfE_{11} \dot{\bfx}_1(t)+\bfE_{12} \dot{\bfx}_2(t) & = & \bfA_{11} \bfx_1(t) +\bfA_{12}\bfx_2(t) +\bfB_1 \bfu(t), \\
 \mathbf{0} & = & \bfA_{21}\bfx_1(t) + \bfA_{22}\bfx_2(t) + \bfB_2\bfu(t), \\
 \bfy(t) &=& \bfC_1\bfx_1(t) +\bfC_2\bfx_2(t)+\bfD \bfu(t), \\
 \end{array}
\end{equation}
where the state is $\bfx(t) = [\, \bfx_1^T(t),\; \bfx_2^T(t)\,]^T \in \IR^n$ with $\bfx_1(t)\in \IR^{n_1}$, $\bfx_2(t) \in \IR^{n_2}$ and $n_1 + n_2 = n$, the input is $\bfu(t)\in \IR^{m}$, the output is $\bfy(t)\in \IR^p$, and 
$\bfE_{11}, \bfA_{11} \in \IR^{n_1 \times n_1}$, 
$\bfE_{12},\bfA_{12}\in \IR^{n_1 \times n_2}$,
$\bfA_{21}\in \IR^{n_2 \times n_1}$, 
$\bfA_{22}\in \IR^{n_2 \times n_2}$, 
$\bfB_{1}\in \IR^{n_1 \times m}$, 
$\bfB_{2}\in \IR^{n_2 \times m}$, 
$\bfC_{1}\in \IR^{p \times n_1}$, 
$\bfC_{2}\in \IR^{p \times n_2}$, 
$\bfD\in \IR^{p \times m}$. We assume that $\bfA_{22}$ and $\bfE_{11}-\bfE_{12}\bfA^{-1}_{22}\bfA_{21}$ are both nonsingular. In this case system
(\ref{smile}) is of index~1. We now compute the polynomial part of this system.

\begin{proposition} \label{index1_poly}
Let $\bfG(s)$ be a transfer function of the descriptor system \textup{(\ref{smile})}, where 
$\bfA_{22}$ and $\bfE_{11}-\bfE_{12}\bfA^{-1}_{22}\bfA_{21}$ are both nonsingular. 
Then the polynomial part of $\bfG(s)$ is a constant matrix given by  
$$
\bfP(s) = \bfC_1\bfM_1\bfB_2 + \bfC_2\bfM_2\bfB_2+\bfD,
$$ 
where
\begin{eqnarray} 
\bfM_1 & = & (\bfE_{11}-\bfE_{12}\bfA^{-1}_{22}\bfA_{21})^{-1}\bfE_{12}\bfA^{-1}_{22},
\label{M1_exp} \\
\bfM_2 & = &  -\bfA^{-1}_{22}\bfA_{21}(\bfE_{11}-\bfE_{12}\bfA^{-1}_{22}\bfA_{21})^{-1}\bfE_{12}\bfA^{-1}_{22}-\bfA_{22}^{-1}.  \label{M2_exp}
\end{eqnarray}
\end{proposition}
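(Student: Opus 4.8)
The plan is to use that for a descriptor system of index~$1$ the polynomial part of the transfer function has degree zero, so that $\bfP(s)$ coincides with the constant matrix $\lim_{s\to\infty}\bfG(s)$. It therefore suffices to evaluate this limit, and the cleanest route is to solve the linear system $(s\bfE-\bfA)\bfX(s)=\bfB$ block-wise using the $2\times 2$ partitioning in \textup{(\ref{smile})}, rather than to pass through the Weierstrass form, which would require extracting the spectral projectors of the block pencil and is considerably more tedious here.

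Partition $\bfX(s)=(s\bfE-\bfA)^{-1}\bfB=[\,\bfX_1(s)^T,\;\bfX_2(s)^T\,]^T$ conformably with $\bfx$. Then $(s\bfE-\bfA)\bfX=\bfB$ reads
\begin{align*}
(s\bfE_{11}-\bfA_{11})\bfX_1+(s\bfE_{12}-\bfA_{12})\bfX_2 &= \bfB_1,\\
-\bfA_{21}\bfX_1-\bfA_{22}\bfX_2 &= \bfB_2.
\end{align*}
Since $\bfA_{22}$ is nonsingular, the second equation gives $\bfX_2=-\bfA_{22}^{-1}(\bfB_2+\bfA_{21}\bfX_1)$. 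Substituting this into the first equation and collecting the terms multiplying $\bfX_1$, I obtain
$$
\bigl(s\bfE_{\rm S}-\bfA_{\rm S}\bigr)\bfX_1 = \bfB_1+(s\bfE_{12}-\bfA_{12})\bfA_{22}^{-1}\bfB_2,
$$
with $\bfE_{\rm S}=\bfE_{11}-\bfE_{12}\bfA_{22}^{-1}\bfA_{21}$ and $\bfA_{\rm S}=\bfA_{11}-\bfA_{12}\bfA_{22}^{-1}\bfA_{21}$; by hypothesis $\bfE_{\rm S}$ is nonsingular, hence $s\bfE_{\rm S}-\bfA_{\rm S}$ is invertible for all sufficiently large $s$ and $\bfX_1(s)=(s\bfE_{\rm S}-\bfA_{\rm S})^{-1}\bigl(\bfB_1+(s\bfE_{12}-\bfA_{12})\bfA_{22}^{-1}\bfB_2\bigr)$.

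The final step is the asymptotic analysis as $s\to\infty$. Since $\bfE_{\rm S}$ is nonsingular, the Neumann expansion gives $(s\bfE_{\rm S}-\bfA_{\rm S})^{-1}=s^{-1}\bfE_{\rm S}^{-1}+\mathcal{O}(s^{-2})$. Writing the right-hand side as $s\,\bfE_{12}\bfA_{22}^{-1}\bfB_2+\bigl(\bfB_1-\bfA_{12}\bfA_{22}^{-1}\bfB_2\bigr)$, only the product of $s^{-1}\bfE_{\rm S}^{-1}$ with the $\mathcal{O}(s)$ term survives in the limit, so $\bfX_1(s)\to\bfE_{\rm S}^{-1}\bfE_{12}\bfA_{22}^{-1}\bfB_2=\bfM_1\bfB_2$ and therefore $\bfX_2(s)\to-\bfA_{22}^{-1}\bfB_2-\bfA_{22}^{-1}\bfA_{21}\bfM_1\bfB_2=\bfM_2\bfB_2$, with $\bfM_1,\bfM_2$ exactly as in \textup{(\ref{M1_exp})}--\textup{(\ref{M2_exp})}. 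Consequently $\bfG(s)=\bfC_1\bfX_1(s)+\bfC_2\bfX_2(s)+\bfD\to\bfC_1\bfM_1\bfB_2+\bfC_2\bfM_2\bfB_2+\bfD$; since this limit exists and is finite, it is precisely the (constant) polynomial part of $\bfG(s)$. The computation is entirely elementary — the only point requiring attention is tracking the $\mathcal{O}(1)$ versus $\mathcal{O}(s^{-1})$ contributions so that exactly the $s\bfE_{12}\bfA_{22}^{-1}\bfB_2$ term is retained — and, for full rigor on the phrase \emph{polynomial part}, one notes that the existence of $\lim_{s\to\infty}\bfG(s)$ is equivalent to the pencil $\lambda\bfE-\bfA$ having index~$1$, which is guaranteed by the stated assumptions.
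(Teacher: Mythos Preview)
Your proof is correct and follows essentially the same route as the paper's: both partition $(s\bfE-\bfA)^{-1}\bfB$ into blocks, eliminate $\bfX_2$ using the nonsingularity of $\bfA_{22}$, obtain the same Schur-complement equation for $\bfX_1$, and then compute the limit as $s\to\infty$. The only cosmetic differences are your explicit use of the Neumann expansion (the paper simply takes the limit directly) and your remark justifying that index~$1$ forces the polynomial part to be constant, which the paper leaves implicit.
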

\begin{proof}
Consider 
$$
(s\bfE - \bfA)^{-1}\bfB = 
\left[ \begin{array}{cc} s\bfE_{11}-\bfA_{11}  & s\bfE_{12}-\bfA_{12}\\  -\bfA_{21} & -\bfA_{22} \end{array} \right]^{-1}\left[ \begin{array}{c} \bfB_1\\ \bfB_2 \end{array} \right] = 
\left[ \begin{array}{c} \bfF_1(s)\\ \bfF_2(s) \end{array} \right]. 
$$ 
This leads to 
\begin{eqnarray} \label{x1}
(s\bfE_{11}-\bfA_{11})\bfF_1(s) +  (s\bfE_{12}-\bfA_{12})\bfF_2(s) &=& \bfB_1, \\
 \label{x2}
-\bfA_{21}\bfF_1(s) -\bfA_{22}\bfF_2(s) &=& \bfB_2.
\end{eqnarray}
Solving (\ref{x2}) for $\bfF_2(s)$ gives 
$\bfF_2(s) = -\bfA_{22}^{-1}(\bfB_2 + \bfA_{21}\bfF_1(s))$, and, thus,
$$
\bfF_1(s) =  \left( (s\bfE_{11}-\bfA_{11})- (s\bfE_{12}-\bfA_{12})\bfA_{22}^{-1}\bfA_{21}\right)^{-1} \left(\bfB_1 + (s\bfE_{12}-\bfA_{12})\bfA_{22}^{-1}\bfB_2\right)
$$
implying that
$$
\lim_{s \rightarrow \infty} \bfF_1(s) = \left(\bfE_{11} - \bfE_{12}\bfA_{22}^{-1}\bfA_{21}\right)^{-1}\bfE_{12}\bfA_{22}^{-1}\bfB_2.
$$
Taking into account (\ref{x2}), we have 
$$
\lim_{s \rightarrow \infty} \bfF_2(s) = \left[-\bfA_{22}^{-1}\bfA_{21}\left(\bfE_{11} - \bfE_{12}\bfA_{22}^{-1}\bfA_{21}\right)^{-1}\bfE_{12}\bfA_{22}^{-1}-\bfA_{22}^{-1}\right]\bfB_2.$$
Finally, note that
$\bfP(s) = \lim\limits_{s \rightarrow \infty} \bfG(s) = \lim\limits_{s \rightarrow \infty} (\bfC_1\bfF_1(s) + \bfC_2\bfF_2(s)+\bfD)$, which leads to the desired conclusion.
\end{proof}

We are now ready to state the interpolation result for the descriptor system (\ref{smile}). 
This result was briefly hinted at in the recent survey \cite{antoulas2010imr}. 
Here, we present it with a~formal proof together with the formula developed 
for $\bfP(s)$ in Proposition~\ref{index1_poly}. 
As our main focus will be \mbox{$\Htwo$-based} model reduction, 
we will list the interpolation conditions only for the bi-tangential Hermite 
interpolation. Extension to the higher-order derivative interpolation is 
straightforward as shown in the earlier sections. 
\begin{lemma} \label{D_term}
Let $\bfG(s)$ be a~transfer function of the semi-explicit descriptor system 
\textup{(\ref{smile})}. For given $r$ distinct interpolation points 
$\{\sigma_i\}_{i=1}^r$, left tangential directions $\{ \bfsfc_i\}_{i=1}^r$ and
right tangential directions $\{ \bfsfb_i\}_{i=1}^r$, let 
$\bfV \in \IC^{n \times r}$ and $\bfW \in \IC^{n \times r}$ be given by
\begin{eqnarray} \label{eqn:V}
\bfV &=& 
[\,(\sigma_1\bfE-\bfA)^{-1}\bfB\bfsfb_1,\;\ldots,\;(\sigma_r\bfE-\bfA)^{-1}\bfB\bfsfb_r\,],\\ \label{eqn:W}
\bfW &=&
[\,(\sigma_1\bfE-\bfA)^{-T}\bfC^T\bfsfc_1,\; \ldots,\;(\sigma_r\bfE-\bfA)^{-T}\bfC^T\bfsfc_r\,].
\end{eqnarray}
Furthermore, let $\mathcal{B}$ and $\mathcal{C}$ be the matrices composed of the tangential directions as
\begin{equation}
\mathcal{B} = [\,\bfsfb_1, \,\ldots,\, \bfsfb_r \,] \qquad \text{and} \qquad
\mathcal{C} = [\,\bfsfc_1, \,\ldots,\, \bfsfc_r \,].
\end{equation}
Define the reduced-order system matrices as
\begin{equation}
\begin{array}{ll}
\bfEr= \bfW^{T} \bfE \bfV, &\quad 
\bfAr = \bfW^{T} \bfA \bfV + \mathcal{C}^T\bfDr\mathcal{B}, \quad 
\bfBr = \bfW^{T}\bfB - \mathcal{C}^T\bfDr, \\[.1in]
\bfCr = \bfC \bfV - \bfDr\mathcal{B}, & \quad
\bfDr  = \bfC_1\bfM_1\bfB_2 + \bfC_2\bfM_2\bfB_2+\bfD.
\end{array}
\label{eq:shiftedABC}
\end{equation}
Then the polynomial parts of $\bfGr(s)=\bfCr(s\bfEr-\bfAr)^{-1}\bfBr + \bfDr$ and $\bfG(s)$
 match assuming $\bfEr$ is nonsingular,  and $\bfGr(s)$ satisfies the bi-tangential Hermite interpolation conditions
\begin{eqnarray*} 
\bfG(\sigma_i) \bfsfb_i =  \bfGr(\sigma_i) \bfsfb_i,\qquad
\bfsfc_i^T \bfG(\sigma_i)  =  \bfsfc_i^T \bfGr(\sigma_i), \qquad
\bfsfc_i^T\bfG'(\sigma_i) \bfsfb_i =  \bfsfc_i^T\bfGr'(\sigma_i) \bfsfb_i
\end{eqnarray*}
for $i=1,\ldots,r$, provided $\sigma_i\bfE-\bfA$ and $\sigma_i\bfEr-\bfAr$ are both nonsingular.
\end{lemma}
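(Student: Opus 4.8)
The plan is to establish the two claims of the lemma separately: first that $\bfGr(s)$ and $\bfG(s)$ have the same polynomial part, and then that $\bfGr(s)$ is a bi-tangential Hermite interpolant of $\bfG(s)$ at the $\sigma_i$. The polynomial-part claim is the quick one. Since $\bfEr=\bfW^T\bfE\bfV$ is assumed nonsingular, $(s\bfEr-\bfAr)^{-1}\to\mathbf{0}$ as $s\to\infty$, so $\bfGr(s)$ is proper and its polynomial part is the constant $\lim_{s\to\infty}\bfGr(s)=\bfDr$. By Proposition~\ref{index1_poly} the polynomial part of $\bfG(s)$ is the constant $\bfP(s)=\bfC_1\bfM_1\bfB_2+\bfC_2\bfM_2\bfB_2+\bfD$, which is exactly the value prescribed for $\bfDr$ in (\ref{eq:shiftedABC}). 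Hence the two polynomial parts coincide, and in particular $\bfG(s)-\bfGr(s)$ is strictly proper --- precisely the property whose absence caused the unbounded error in Example~\ref{ex:1}.

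For the interpolation conditions I would not invoke Theorem~\ref{thm:interpolation_highorder} directly, because the correction terms in (\ref{eq:shiftedABC}) destroy the plain Petrov--Galerkin structure; instead I would redo the underlying computation on the corrected realization. The backbone is the pair of tangential identities that follow at once from (\ref{eqn:V})--(\ref{eqn:W}) and from $\mathcal{B}\bfe_i=\bfsfb_i$, $\bfe_i^T\mathcal{C}^T=\bfsfc_i^T$ (with $\bfe_i$ the $i$-th unit vector of $\IR^r$): for every $i$, $(\sigma_i\bfE-\bfA)\bfV\bfe_i=\bfB\bfsfb_i$ and $\bfe_i^T\bfW^T(\sigma_i\bfE-\bfA)=\bfsfc_i^T\bfC$. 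The key point is that the (rank at most $\min(p,m)$) feedthrough correction built into $\bfAr$, $\bfBr$, $\bfCr$ is tuned so that these identities lift verbatim to the reduced pencil: using $\sigma_i\bfEr-\bfW^T\bfA\bfV=\bfW^T(\sigma_i\bfE-\bfA)\bfV$ and again $\mathcal{B}\bfe_i=\bfsfb_i$, $\bfe_i^T\mathcal{C}^T=\bfsfc_i^T$, a direct substitution gives $(\sigma_i\bfEr-\bfAr)\bfe_i=\bfBr\bfsfb_i$ and $\bfe_i^T(\sigma_i\bfEr-\bfAr)=\bfsfc_i^T\bfCr$. Since $\sigma_i\bfEr-\bfAr$ is nonsingular by hypothesis, these rearrange to $(\sigma_i\bfEr-\bfAr)^{-1}\bfBr\bfsfb_i=\bfe_i$ and $\bfsfc_i^T\bfCr(\sigma_i\bfEr-\bfAr)^{-1}=\bfe_i^T$.

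Granting those two ``pointwise solved'' relations, the three interpolation conditions come out by substitution. For the right condition, $\bfGr(\sigma_i)\bfsfb_i=\bfCr(\sigma_i\bfEr-\bfAr)^{-1}\bfBr\bfsfb_i+\bfDr\bfsfb_i=\bfCr\bfe_i+\bfDr\bfsfb_i$; plugging in the formula for $\bfCr$ and $\bfC\bfV\bfe_i=\bfC(\sigma_i\bfE-\bfA)^{-1}\bfB\bfsfb_i$, the correction terms cancel and one is left with $\bfG(\sigma_i)\bfsfb_i$. The left condition $\bfsfc_i^T\bfGr(\sigma_i)=\bfsfc_i^T\bfG(\sigma_i)$ is the mirror image, built from $\bfsfc_i^T\bfCr(\sigma_i\bfEr-\bfAr)^{-1}=\bfe_i^T$ and the formula for $\bfBr$. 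For the Hermite condition I would differentiate, $\bfGr'(\sigma_i)=-\bfCr(\sigma_i\bfEr-\bfAr)^{-1}\bfEr(\sigma_i\bfEr-\bfAr)^{-1}\bfBr$ and likewise $\bfG'(\sigma_i)=-\bfC(\sigma_i\bfE-\bfA)^{-1}\bfE(\sigma_i\bfE-\bfA)^{-1}\bfB$; sandwiching between $\bfsfc_i^T(\cdot)\bfsfb_i$ and inserting the two pointwise relations collapses both sides to $-\bfe_i^T\bfEr\bfe_i$, and here the feedthrough plays no role at all since differentiation annihilates constants.

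The step I expect to be the real work is the lifting in the second paragraph: verifying that the particular combination $\mathcal{C}^T\bfDr\mathcal{B}$, $\mathcal{C}^T\bfDr$, $\bfDr\mathcal{B}$ appearing in $\bfAr$, $\bfBr$, $\bfCr$ is simultaneously the correction that pins the polynomial part to $\bfP(s)$ and the one that leaves $(\sigma_i\bfEr-\bfAr)^{-1}\bfBr\bfsfb_i=\bfe_i$ undisturbed --- i.e., that the low-rank update, although it changes $\bfAr$, $\bfBr$ and $\bfCr$, cancels itself out exactly along the interpolation directions. Everything after that point is the same bookkeeping as in the $\bfE=\bfI$ proof of Theorem~\ref{thm:interpolation_highorder}, merely carried out on the corrected realization.
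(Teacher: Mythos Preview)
Your proof is correct and takes the same route as the paper's: the polynomial-part match via Proposition~\ref{index1_poly} together with the nonsingularity of $\bfEr$ is verbatim the paper's first step, while for the interpolation conditions the paper does not carry out any computation but simply cites \cite{beattie2008ipm,mayo2007fsg} for the fact that the $\bfDr$-shift in (\ref{eq:shiftedABC}) preserves the bi-tangential Hermite interpolation encoded in $\bfV$ and $\bfW$. Your direct verification via $(\sigma_i\bfEr-\bfAr)\bfe_i=\bfBr\bfsfb_i$ and its left-sided dual is precisely the argument underlying those cited results, so you have made explicit what the paper leaves to the references.
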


\begin{proof}
Since $\bfEr$ is nonsingular, $\lim\limits_{s \rightarrow \infty}\bfGr(s) = \bfDr$. But by 
 Lemma~\ref{index1_poly}, we have $\widetilde{\bfD} = \lim\limits_{s \rightarrow \infty}\bfG(s)$ ensuring that the polynomial parts of $\bfG(s)$ and $\bfGr(s)$ coincide. The interpolation property is
 a~result of 
 \cite{beattie2008ipm,mayo2007fsg}, where it is shown
 that the appropriate shifting of the reduced-order quantities
 with a~non-zero feedthrough term as done in (\ref{eq:shiftedABC})  attains the original bi-tangential interpolation conditions hidden in $\bfV$ and $\bfW$ of
 (\ref{eqn:V}) and (\ref{eqn:W}), respectively. 
 \end{proof}

This result leads to Algorithm~\ref{DAEindex1_alg_1step}, which achieves bi-tangential Hermite interpolation of the semi-explicit descriptor system (\ref{smile}) without explicitly forming the spectral projectors.

\begin{figure}[ht]
\centering
    \framebox[5.1in][t]{
    \begin{minipage}[c]{4.95in}
     \begin{algorithm} \label{DAEindex1_alg_1step}
    {\bf  Interpolatory  model reduction for semi-explicit \\ \hspace*{28mm}
    descriptor systems of index 1}
\begin{enumerate}[{\rm 1)}]
\item Make an~initial selection of the interpolation points  $\{\sigma_i\}_{i=1}^r$ and the 
tangential directions $\{{\bfsfb}_i\}_{i=1}^r$ and  $\{{\bfsfc}_i\}_{i=1}^r$.
\item  $\;\bfV =  [\,(\sigma_1 \bfE- \bfA)^{-1}\bfB{\bfsfb}_1,\,\ldots, 
\,(\sigma_r \bfE - \bfA)^{-1}\bfB{\bfsfb}_r\,]$,\\[1mm]
     $ \bfW = [\,(\sigma_1\bfE-\bfA)^{-T}\bfC^T\bfsfc_1,\, \ldots,\,
(\sigma_r\bfE-\bfA)^{-T}\bfC^T\bfsfc_r\,]$.
\item Define $\bfDr  = \bfC_1\bfM_1\bfB_2 + \bfC_2\bfM_2\bfB_2+\bfD$, where $\bfM_1$ 
and $\bfM_2$ are defined in \textup{(\ref{M1_exp})} and \textup{(\ref{M2_exp})}, respectively.
\item Define $\mathcal{B} = \left[\, \bfsfb_1,\, \ldots,\, \bfsfb_r \,\right]$ and
$\mathcal{C} = \left[\, \bfsfc_1,\, \ldots,\, \bfsfc_r \,\right]$.
\item $\bfEr= \bfW^{T} \bfE \bfV$,~ 
$\bfAr = \bfW^{T} \bfA \bfV + \mathcal{C}^T\bfDr\mathcal{B}$,~ 
$\bfBr = \bfW^{T}\bfB - \mathcal{C}^T\bfDr$,~
$\bfCr = \bfC \bfV - \bfDr\mathcal{B}$.
\end{enumerate}
\end{algorithm}
     \end{minipage}
    }
  \end{figure}
 
We want to emphasize that the assumption in Lemma~\ref{D_term} that 
$\bfEr$ be nonsingular is not restrictive. This will be the case generically.
If $\bfW$ and $\bfV$ are full-rank $n \times r$ matrices, the rank of the
$r \times r $ matrix $\bfEr = \bfW^T \bfE \bfV$ will be generically $r$ 
as long as $rank(\bfE)>r$. The fact that $\bfEr$ will be full-rank is indeed 
the precise reason why we cannot simply apply 
Theorem~\ref{thm:interpolation_highorder} to descriptor systems.
   
\subsection{Optimal $\Htwo$ model reduction for semi-explicit descriptor systems}
Lemma~\ref{D_term} provides the theoretical basis for an IRKA-based 
iteration for $\Htwo$ model reduction of semi-explicit descriptor systems.
One naive approach would be the following:
Given system (\ref{smile}), simply apply IRKA of \cite{gugercin2008hmr} to obtain 
 an~intermediate reduced-order model 
$\widehat{\bfG}(s) = \widehat{\bfC}(s\widehat{\bfE} - \widehat{\bfA})^{-1}\widehat{\bfB}+\widehat{\bfD}$. Of course, this will be generically an ODE and will not necessarily match the behavior of $\bfG(s)$ around $s=\infty$. Thus, apply Lemma  \ref{D_term} to obtain the final reduced-model $\bfGr(s) = \bfCr (s\bfEr - \bfAr)^{-1}\bfBr + \widetilde{\bfD}$ 
 with 
  \begin{align}
  \bfEr=  \widehat{\bfE},\quad
  \bfAr =  \widehat{\bfA}+ \mathcal{C}^T\bfDr\mathcal{B}, \quad
  \bfBr = \widehat{\bfB} - \mathcal{C}^T\bfDr, \quad
  \bfCr = \widehat{\bfC} - \bfDr\mathcal{B}, \label{eq:naiveIRKA}
\end{align}
where $\bfDr$ is defined as in Lemma~\ref{D_term}.  While this shifting of the intermediate matrices by the $\bfD$-term  guarantees that the polynomial parts of $\bfG(s)$ and $\bfGr(s)$ match, the $\Htwo$ optimality conditions will {\it not} be satisfied.
The reason is as follows. Recall that the $\Htwo$ optimality requires bi-tangential Hermite interpolation at the mirror images of the reduced-order poles. The intermediate model $\widehat{\bfG}(s)$ satisfies this but since it does not match the polynomial part, the resulting $\Htwo$ error is unbounded. Then constructing $\bfGr(s)$ as in (\ref{eq:naiveIRKA}), we enforce the matching of the polynomial part but $\bfGr(s)$ still interpolates $\bfG(s)$ at the same interpolation points as $\widehat{\bfG}(s)$, i.e.,
at the mirror images of the poles of $\widehat{\bfG}(s)$. However, clearly due to (\ref{eq:naiveIRKA}), the poles of $\widehat{\bfG}(s)$ and $\bfGr(s)$ are different; thus 
$\bfGr(s)$ will no longer satisfy the optimal $\mathcal{H}_2$ necessary conditions. In order to achieve both the mirror-image interpolation conditions and the polynomial part matching,
the $\bfDr$ term modification must be included  throughout the iteration, not just at the end.
This results in Algorithm \ref{DAEindex1_alg}.

\begin{figure}[ht]
\centering
    \framebox[5.1in][t]{
    \begin{minipage}[c]{4.95in}
    \begin{algorithm} \label{DAEindex1_alg}
     {\bf IRKA for semi-explicit descriptor systems of index~1}
\begin{enumerate}[{\rm 1)}]
\item Make an initial shift selection $\{\sigma_i\}_{i=1}^r$ and initial tangential
directions $\{{\bfsfb}_i\}_{i=1}^r$ and $\{{\bfsfc}_i\}_{i=1}^r$.
\item  $\bfV_r = \left[\,(\sigma_1 \bfE - \bfA)^{-1}\bfB{\bfsfb}_1,\,\ldots,\,
(\sigma_r \bfE - \bfA)^{-1}\bfB{\bfsfb}_r\,\right]$, \\
$\bfW_r = \left[\,({\sigma_1} \bfE - \bfA)^{-T}\bfC^T{\bfsfc}_1,\,\ldots,\,
({\sigma_r} \bfE - \bfA)^{-T}\bfC^T{\bfsfc}_r\,\right]$.
\item Define $ \bfDr  = \bfC_1\bfM_1\bfB_2 + \bfC_2\bfM_2\bfB_2+\bfD$, 
where $\bfM_1$ and $\bfM_2$ are defined in \textup{(\ref{M1_exp})} and \textup{(\ref{M2_exp})}, respectively.
\item Define $\mathcal{B} = \left[\,\bfsfb_1,\, \ldots,\,\bfsfb_r\,\right]$ and
$\mathcal{C} = \left[\, \bfsfc_1,\,\ldots,\,\bfsfc_r\,\right]$.

\item while (not converged)
\begin{enumerate}[{\rm a)}]
\item $\bfEr= \bfW^{T} \bfE \bfV$, 
$\bfAr = \bfW^{T} \bfA \bfV + \mathcal{C}^T\bfDr\mathcal{B}$, 
$\bfBr = \bfW^{T}\bfB - \mathcal{C}^T\bfDr$,
$\bfCr = \bfC \bfV - \bfDr\mathcal{B}$.

\item Compute $\bfY^*\bfAr\bfZ = \mbox{\rm diag}(\lambda_1,\ldots,\lambda_r)$ 
and $\bfY^*\bfEr\bfZ = \bfI_r$, where the columns of $\bfZ=[\bfz_1,\ldots,\bfz_r]$
 and $\bfY=[\bfy_1,\ldots,\bfy_r]$ are, respectively, the right and left eigenvectors of $\lambda\bfEr-\bfAr$.
\item $\sigma_i \leftarrow -\lambda_i$,
${\bfsfb}_i^T \leftarrow \bfy_i^*\bfBr$ and 
${\bfsfc}_i \leftarrow \bfCr\bfz_i$ for $i=1,\ldots,r$.
\item  $\bfV = \left[\,(\sigma_1 \bfE - \bfA)^{-1}\bfB{\bfsfb}_1,\,\ldots,\,
(\sigma_r \bfE - \bfA)^{-1}\bfB{\bfsfb}_r\,\right]$, \\
$\bfW = \left[\,({\sigma_1} \bfE - \bfA)^{-T}\bfC^T{\bfsfc}_1,\,\ldots,\,
({\sigma_r} \bfE - \bfA)^{-T}\bfC^T{\bfsfc}_r\,\right]$.
\end{enumerate}
end while

\item $\bfEr= \bfW^{T} \bfE \bfV$, 
$\bfAr = \bfW^{T} \bfA \bfV + \mathcal{C}^T\bfDr\mathcal{B}$,
$\bfBr = \bfW^{T}\bfB - \mathcal{C}^T\bfDr$, 
$\bfCr = \bfC \bfV - \bfDr\mathcal{B}$.
\end{enumerate}
\end{algorithm}
    \end{minipage}
    }
  \end{figure}
  
The next result  is a restatement  of the above discussion. 

\begin{corollary}
Let $\bfG(s)$ be a~transfer function of the semi-explicit descriptor system 
\textup{(\ref{smile})}
 and let $\bfGr(s) = \bfCr (s\bfEr - \bfAr)^{-1}\bfBr + \widetilde{\bfD}$
be obtained by Algorithm~\textup{\ref{DAEindex1_alg}}. Then $\bfGr(s)$ satisfies 
the first-order necessary conditions of the $\mathcal{H}_2$ optimal model 
reduction problem.
\end{corollary}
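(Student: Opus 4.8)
The plan is to show that Algorithm~\ref{DAEindex1_alg} is, up to the invertible change of coordinates induced by the $\bfDr$-shift, exactly the standard IRKA iteration of \cite{gugercin2008hmr} applied to the strictly proper part $\bfGsp(s)$, and then invoke Theorem~\ref{thm:h2cond} to conclude that its fixed point satisfies the first-order $\Htwo$ optimality conditions. First I would record the structural fact, already established in the proof of Lemma~\ref{D_term} via \cite{beattie2008ipm,mayo2007fsg}, that the shifted matrices $(\bfEr,\bfAr,\bfBr,\bfCr,\bfDr)$ in \eqref{eq:shiftedABC} produce a transfer function $\bfGr(s)$ whose polynomial part equals $\bfP(s)$ (by Proposition~\ref{index1_poly} and nonsingularity of $\bfEr$) and whose strictly proper part $\bfGrsp(s)$ is precisely the transfer function of the \emph{unshifted} pencil data $(\bfW^T\bfE\bfV,\ \bfW^T\bfA\bfV,\ \bfW^T\bfB,\ \bfC\bfV)$; that is, $\bfGrsp(s) = \bfC\bfV(s\bfW^T\bfE\bfV - \bfW^T\bfA\bfV)^{-1}\bfW^T\bfB$. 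This is the key observation: the $\bfDr$-shift changes $\bfGr$ only by the constant $\bfDr$, so $\bfGrsp$ is governed entirely by the projected pencil, and the eigenvalues $\lambda_i$ and residue directions computed in Step~3b--3c of Algorithm~\ref{DAEindex1_alg} are exactly the poles and tangential residue directions of $\bfGrsp(s)$.

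Next I would identify the iteration map. Steps~5a--5d of Algorithm~\ref{DAEindex1_alg}, after stripping the cosmetic $\bfDr$-corrections, update $(\bfV,\bfW)$ using the rational Krylov directions $(\sigma_i\bfE-\bfA)^{-1}\bfB\bfsfb_i$ and $(\sigma_i\bfE-\bfA)^{-T}\bfC^T\bfsfc_i$ at the new shifts $\sigma_i \leftarrow -\lambda_i$ with $\bfsfb_i^T \leftarrow \bfy_i^*\bfBr$, $\bfsfc_i \leftarrow \bfCr\bfz_i$. I would then argue that, because system \eqref{smile} is index~1, one has $\bfP_l\bfB = \bfB$ and $\bfC\bfP_r = \bfC$ \emph{modulo the range/corange of the shifted Krylov vectors}: more precisely, the columns of $\bfV$ and $\bfW$ already lie in the correct finite deflating subspaces after a bounded number of steps, so that $\bfW^T\bfE\bfV$, $\bfW^T\bfA\bfV$, $\bfW^T\bfB$, $\bfC\bfV$ reproduce the same projected quantities that IRKA applied to $\bfGsp$ would produce. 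Combining this with the previous paragraph, the sequence of reduced strictly proper parts $\bfGrsp^{(k)}(s)$ generated by Algorithm~\ref{DAEindex1_alg} coincides with the IRKA sequence for $\bfGsp(s)$. Hence at convergence $\bfGrsp(s)$ is a fixed point of IRKA for $\bfGsp$, so it is a bi-tangential Hermite interpolant to $\bfGsp(s)$ at $\{-\lambda_i\}$ with the residue tangential directions, which are exactly the conditions of part~2 of Theorem~\ref{thm:h2cond}. Since the final Step~6 produces $\bfGr(s)$ with $\bfGrip(s) = \bfP(s) = \bfGip(s)$ by Lemma~\ref{D_term}, part~1 of Theorem~\ref{thm:h2cond} is also met, and therefore $\bfGr(s)$ satisfies the first-order necessary conditions for the $\Htwo$-optimal model reduction problem for $\bfG(s)$.

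\textbf{Main obstacle.} The delicate point is justifying that the explicit $\bfDr$-shifted interpolation machinery of Lemma~\ref{D_term} is genuinely equivalent, at the level of the iteration \emph{and its fixed points}, to running unshifted IRKA on $\bfGsp$ — in particular that the eigenvalues $\lambda_i$ of $\lambda\bfEr - \bfAr$ (the \emph{shifted} pencil) are the same as the eigenvalues of the unshifted projected pencil $\lambda(\bfW^T\bfE\bfV) - (\bfW^T\bfA\bfV)$, i.e.\ that the rank-$\le p$ update $\mathcal{C}^T\bfDr\mathcal{B}$ does not perturb the spectrum used to define the next shifts. This follows from the fact established in \cite{beattie2008ipm,mayo2007fsg} that the shift is precisely the realization-level manifestation of adding a constant $\bfDr$ to the transfer function (which leaves poles unchanged), but it must be stated carefully so that the ``mirror-image of reduced poles'' bookkeeping lines up. Once that equivalence is in hand — together with the observation already made in the text following \eqref{eq:naiveIRKA} that the $\bfDr$-correction must be applied \emph{inside} the loop, which is exactly what Algorithm~\ref{DAEindex1_alg} does — the corollary reduces immediately to Theorem~\ref{thm:h2cond}, and no further computation is needed.
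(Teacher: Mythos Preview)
Your argument rests on a claim that is false, and the paper itself says so explicitly in the paragraph immediately preceding the corollary. You assert that ``the $\bfDr$-shift changes $\bfGr$ only by the constant $\bfDr$,'' so that $\bfGrsp(s)=\bfC\bfV(s\bfW^T\bfE\bfV-\bfW^T\bfA\bfV)^{-1}\bfW^T\bfB$ and the eigenvalues of the shifted pencil $\lambda\bfEr-\bfAr$ coincide with those of the unshifted pencil $\lambda(\bfW^T\bfE\bfV)-(\bfW^T\bfA\bfV)$. But the shift in \eqref{eq:shiftedABC} adds the rank-$p$ term $\mathcal{C}^T\bfDr\mathcal{B}$ to $\bfAr$, which generically moves every eigenvalue; the paper states this directly: ``clearly due to~\eqref{eq:naiveIRKA}, the poles of $\widehat{\bfG}(s)$ and $\bfGr(s)$ are different.'' The results of \cite{beattie2008ipm,mayo2007fsg} guarantee only that the \emph{interpolation conditions} survive the shift, not that the transfer function changes by an additive constant. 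Consequently, your identification of Algorithm~\ref{DAEindex1_alg} with IRKA on $\bfGsp$ via the unshifted projected pencil breaks down, and with it the whole reduction to Theorem~\ref{thm:h2cond} along that route.

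The paper's argument is shorter and avoids this trap: it works directly with the \emph{shifted} realization throughout. At a fixed point of Algorithm~\ref{DAEindex1_alg}, Step~5c gives $\sigma_i=-\lambda_i$ and $\bfsfb_i^T=\bfy_i^*\bfBr$, $\bfsfc_i=\bfCr\bfz_i$, where $\lambda_i,\bfy_i,\bfz_i$ are the eigendata of the shifted pencil $\lambda\bfEr-\bfAr$. Lemma~\ref{D_term} then says $\bfGr$ bi-tangentially Hermite-interpolates $\bfG$ at $\sigma_i=-\lambda_i$ with these directions, and $\bfGrip=\bfDr=\bfGip$. Since the strictly proper part of $\bfGr$ is $\bfCr(s\bfEr-\bfAr)^{-1}\bfBr$ with the \emph{same} $\bfEr,\bfAr,\bfBr,\bfCr$, its poles and residue directions are exactly the $\lambda_i,\bfsfb_i,\bfsfc_i$ used, and $\bfG-\bfGr=\bfGsp-\bfGrsp$ transfers the interpolation to the strictly proper parts. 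That is precisely the content of Theorem~\ref{thm:h2cond}. No comparison with the unshifted pencil is needed.
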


\subsection{Supersonic inlet flow example} \label{sec:inlet}

Consider the Euler equations modelling the unsteady flow through a supersonic diffuser
as described in \cite{LassW05}. Linearization around a~steady-state solution and spatial discretization 
using a~finite volume method leads to a~semi-explicit descriptor system (\ref{smile}) of dimension $n=11730$. 
For simplicity, we focus on the single-input single-output subsystem dynamics corresponding to the input as  
the bleed actuation mass flow  and the output as the average Mach number. 

It is important to emphasize that applying balanced truncation to this model is far from trivial
because of difficulty of solving the Lyapunov equations. Instead, we apply  the proposed method in 
Algorithm~\ref{DAEindex1_alg} to obtain an $\Htwo$-optimal reduced-model of order $r=11$,
where the only cost are sparse linear solves and the need for computing the spectral projectors are removed. 
As pointed out in \cite{LassW05}, the frequencies of practical interest are the low frequency components.
Figure~\ref{figure:inlet}
shows the amplitude and phase plots of $\bfG(\imath\omega)$ and $\widetilde{\bfG}(\imath\omega)$
for $\omega\in[0,25]$ illustrating a~very accurate match of the original model. The resulting model reduction errors are 
$$ 
\frac{\|\bfG-\bfGr\|_{\mathcal{H}_{\infty}}}{  \|\bfG\|_{\mathcal{H}_{\infty}}}
  = 5.2252 \times 10^{-2} \qquad  {\rm and} \qquad 
 \frac{\|\bfG_{\rm sp}-\bfGr_{\rm sp}\|_{\mathcal{H}_{\infty}}}{  \|\bfG_{\rm sp}\|_{\mathcal{H}_{\infty}}}
 = 5.2251 \times 10^{-2}.
$$

 \begin{figure}[ht]
 \label{figure:inlet}
   \centering
 \includegraphics[scale=0.6]{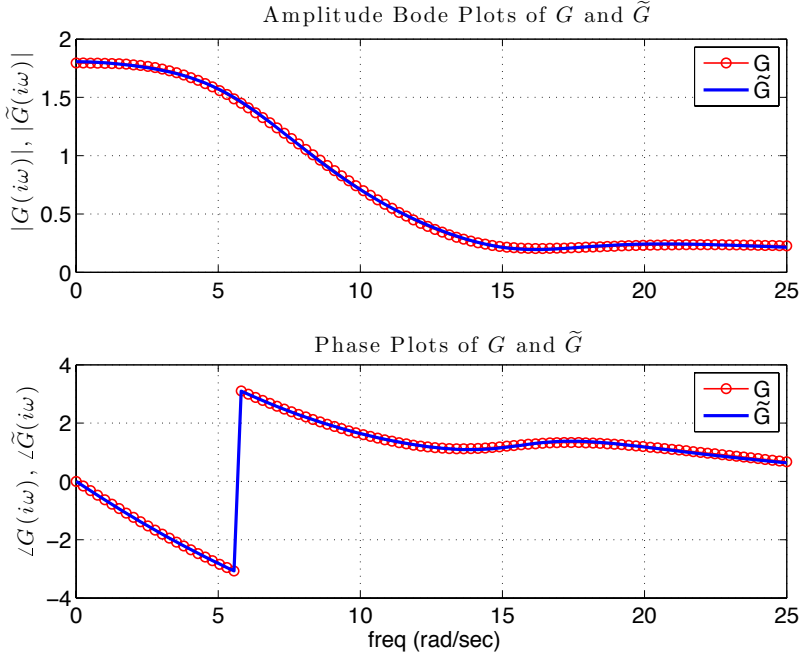}
\caption{\small {Supersonic inlet flow model: amplitude and phase Bode plots of $\bfG(s)$ and $\widetilde{\bfG}(s)$.}}
 \end{figure}
 
\section{Stokes-type descriptor systems of index~2} \label{sec:index2}

In this  section, we consider a~Stokes-type descriptor system of the form
\begin{equation}   \label{DAEsys}
\arraycolsep=2pt
\begin{array}{rcl}
 \bfE_{11} \dot{\bfx}_1(t) & = & \bfA_{11} \bfx_1(t) +\bfA_{12}\bfx_2(t) +\bfB_1 \bfu(t), \\
 \mathbf{0} & = & \bfA_{21}\bfx_1(t) + \bfB_2\bfu(t), \\
 \bfy(t) &=& \bfC_1\bfx_1(t) +\bfC_2\bfx_2(t) + \bfD\bfu(t), \\
 \end{array}
\end{equation}
where the state is $\bfx(t) = \left[ \bfx_1^T(t),\; \bfx_2^T(t) \right]^T \in \IR^n$ with 
$\bfx_1(t)\in \IR^{n_1}$, $\bfx_2(t) \in \IR^{n_2}$ and $n_1 + n_2 = n$, the input is 
$\bfu(t)\in \IR^{m}$, the output is $\bfy(t)\in\IR^p$, and 
$\bfE_{11}, \bfA_{11} \in \IR^{n_1 \times n_1}$, $\bfA_{12}\in \IR^{n_1 \times n_2}$,
$\bfA_{21}\in \IR^{n_2 \times n_1}$, $\bfB_{1}\in \IR^{n_1 \times m}$, 
$\bfB_{2}\in \IR^{n_2 \times m}$, $\bfC_{1}\in \IR^{p \times n_1}$, 
$\bfC_{2}\in \IR^{p \times n_2}$, and $\bfD \in \IR^{p \times m}$. 
We assume that $\bfE_{11}$ is nonsingular, $\bfA_{12}$ and $\bfA_{21}^T$ have both full  column rank and
$\bfA_{21}\bfE_{11}^{-1}\bfA_{12}$ is nonsingular. In this case, system (\ref{DAEsys}) is of index~2.

In \cite{heinkenschloss2008btm}, the authors showed how to apply ADI-based balanced truncation to systems of the form (\ref{DAEsys})
without explicit projector computation. 
Here, we extend this analysis to interpolatory model reduction 
and show how to reduce (\ref{DAEsys}) optimally in the $\Htwo$-norm without computing the deflating subspaces. 
Unlike \cite{heinkenschloss2008btm},  $\bfE_{11}$ is not assumed to be symmetric and positive definite,
and $\bfA_{21}$ is not assumed to be equal to $\bfA_{12}^T$. 

First, consider system (\ref{DAEsys}) with $\bfB_{2} = \mathbf{0}$, as the case of 
$\bfB_2 \neq \mathbf{0}$ follows similarly. 
Following the exposition of \cite{heinkenschloss2008btm}, consider the projectors 
$$
\arraycolsep=2pt
\begin{array}{rcl}
\bfpi_l & = & \bfI-\bfE_{11}^{-1}\bfA_{12}(\bfA_{21}\bfE_{11}^{-1}\bfA_{12})^{-1}\bfA_{21}, \\[1mm]
\bfpi_r & = & \bfI-\bfA_{12}(\bfA_{21}\bfE_{11}^{-1}\bfA_{12})^{-1}\bfA_{21}\bfE_{11}^{-1}.
\end{array}
$$
Then the descriptor system (\ref{DAEsys}) can be decoupled into a~system 
\begin{equation}   \label{37sys}
\arraycolsep=2pt
\begin{array}{rcl}
\bfpi_l\bfE_{11}\bfpi_r \dot{\bfx}_1(t) & = & \bfpi_l\bfA_{11}\bfpi_r \bfx_1(t) + \bfpi_l\bfB_1 \bfu(t) \\
 \bfy(t) &=& \bfC\bfpi_r\bfx_1(t) +\cbfD\bfu(t) 
 \end{array}
 \end{equation}
with
$$
\arraycolsep=2pt
\begin{array}{rcl}
\bfC & = & \bfC_1 - \bfC_2(\bfA_{21}\bfE_{11}^{-1}\bfA_{12})^{-1}\bfA_{21}\bfE_{11}^{-1}\bfA_{11}, \\[1mm]
\cbfD& = & \bfD - \bfC_2(\bfA_{21}\bfE_{11}^{-1}\bfA_{12})^{-1}\bfA_{21}\bfE_{11}^{-1}\bfB_1,
\end{array}
$$
and an algebraic equation 
$$
\bfx_2(t)= -(\bfA_{21}\bfE_{11}^{-1}\bfA_{12})^{-1}\bfA_{21}\bfE_{11}^{-1}\bfA_{11}\bfx_1(t)
-(\bfA_{21}\bfE_{11}^{-1}\bfA_{12})^{-1}\bfA_{21}\bfE_{11}^{-1}\bfB_1 \bfu(t).
$$
By decomposing $\bfpi_l$ and $\bfpi_r$ as
\begin{equation} \label{phi_def}
\bfpi_l = \bfTheta_{l,1}^{}\bfTheta_{l,2}^T,  \qquad \bfpi_r = \bfTheta_{r,1}^{}\bfTheta_{r,2}^T
 \end{equation}
with $\bfTheta_{l,j}, \bfTheta_{r,j} \in \IR^{n_1 \times (n_1 - n_2)}$ such that 
\begin{equation}
\bfTheta_{l,2}^T\bfTheta_{l,1}^{} = \bfI, \qquad 
\bfTheta_{r,2}^T\bfTheta_{r,1}^{} = \bfI,
\label{eq:Theta}
\end{equation} 
and defining $\tilde{\bfx}_1(t) = \bfTheta_{r,2}^T\bfx_1(t)$, system (\ref{37sys}) becomes
\begin{equation}   \label{310sys}
\arraycolsep=2pt
  \begin{array}{rcl}
\bfTheta_{l,2}^T\bfE_{11}\bfTheta_{r,1}^{} \dot{\tilde{\bfx}}_1(t) & = & 
\bfTheta_{l,2}^T\bfA_{11}\bfTheta_{r,1} \tilde{\bfx}_1(t) + \bfTheta_{l,2}^T\bfB_1 \bfu(t) \\
 \bfy(t) &=& \bfC\bfTheta_{r,1}\tilde{\bfx}_1(t) +\cbfD\bfu(t).  
 \end{array}
 \end{equation}
Then the reduction of the descriptor system (\ref{DAEsys}) is equivalent to the reduction of system (\ref{37sys}) 
or (\ref{310sys}). However, the beauty of this equivalence lies in the observation that the matrix 
$\bfTheta_{l,2}^T\bfE_{11}\bfTheta_{r,1}^{}$ is nonsingular.
Therefore, standard model reduction procedures for ODEs can be applied to system (\ref{310sys}),
and the obtained reduced-order
model will approximate the descriptor system (\ref{DAEsys}). It is important to emphasize that even though 
(\ref{37sys}) and (\ref{310sys}) are equivalent to (\ref{DAEsys}), the ultimate goal of this section is 
to develop an interpolatory model reduction method that does not require the explicit computation of either 
the projectors $\bfpi_l$, $\bfpi_r$ or the basis matrices $\bfTheta_{l,2}$, $\bfTheta_{r,1}$. 
For this purpose,  
define the matrices 
\begin{equation} 
\label{tilde_def}
\cbfE = \bfpi_l\bfE_{11}\bfpi_r, \qquad 
\cbfA = \bfpi_l\bfA_{11}\bfpi_r, \qquad 
\cbfB = \bfpi_l\bfB_1, \quad 
\cbfC= \bfC\bfpi_r.
\end{equation}
In interpolation setting, the matrix of interest will be $\sigma\cbfE - \cbfA$ with $\sigma\in \IC$. 
Luckily, several key properties of $\cbfE + \tau\cbfA$, $\tau \in \IC$, were already introduced in 
\cite{heinkenschloss2008btm}.
However, we present these results in terms of $\sigma\cbfE - \cbfA$ instead of $\cbfE + \tau\cbfA$.

\begin{lemma} \label{rest_inv_lemma}
Let $\bfTheta_{l,2}$ and $\bfTheta_{r,1}$ be the matrices defined in \textup{(\ref{phi_def})} and let 
$\sigma \in \IC$ be such that $\sigma\bfTheta_{l,2}^T\bfE_{11}\bfTheta_{r,1}^{} - 
\bfTheta_{l,2}^T\bfA_{11}\bfTheta_{r,1}^{}$ is nonsingular. The matrix defined as
\begin{align} \label{restrictedinv}
(\sigma\cbfE-\cbfA)^I :=  \bfTheta_{r,1}(\sigma\bfTheta_{l,2}^T\bfE_{11}\bfTheta_{r,1}^{} - 
\bfTheta_{l,2}^T\bfA_{11}\bfTheta_{r,1})^{-1}\bfTheta_{l,2}^T
\end{align}
satisfies
\begin{align*}
(\sigma\cbfE-\cbfA)^I(\sigma\cbfE-\cbfA) = \bfpi_r \quad \text{and} \quad
(\sigma\cbfE-\cbfA)(\sigma\cbfE-\cbfA)^I = \bfpi_l.
\end{align*}
Similarly, the matrix defined as
\begin{align} \label{restrictedinv_W}
(\sigma\cbfE^T-\cbfA^T)^I :=  \bfTheta_{l,2}(\sigma\bfTheta_{r,1}^T\bfE_{11}^T\bfTheta_{l,2} - 
\bfTheta_{r,1}^T\bfA_{11}^T\bfTheta_{l,2})^{-1}\bfTheta_{r,1}^T
\end{align}
satisfies
\begin{align*}
(\sigma\cbfE^T-\cbfA^T)^I(\sigma\cbfE^T-\cbfA^T) = \bfpi_l^T \quad \text{and} \quad
(\sigma\cbfE^T-\cbfA^T)(\sigma\cbfE^T-\cbfA^T)^I = \bfpi_r^T.
\end{align*}
\end{lemma}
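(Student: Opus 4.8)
The plan is to verify the four claimed identities directly from the defining decompositions, exploiting the factorization \eqref{phi_def} together with the normalization \eqref{eq:Theta}. First I would record the elementary consequences of $\bfpi_l = \bfTheta_{l,1}^{}\bfTheta_{l,2}^T$ and $\bfpi_r = \bfTheta_{r,1}^{}\bfTheta_{r,2}^T$: since $\bfTheta_{l,2}^T\bfTheta_{l,1}^{} = \bfI$ and $\bfTheta_{r,2}^T\bfTheta_{r,1}^{} = \bfI$, the maps $\bfpi_l$, $\bfpi_r$ are indeed projectors, and, more importantly, $\bfpi_l\bfTheta_{l,1}^{} = \bfTheta_{l,1}^{}$, $\bfTheta_{l,2}^T\bfpi_l = \bfTheta_{l,2}^T$, and likewise for the $r$-subscripted quantities. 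A second ingredient I would isolate is the ``collapsing'' identity
\[
\bfTheta_{l,2}^T\,\cbfE\,\bfTheta_{r,1}^{} = \bfTheta_{l,2}^T\bfpi_l\bfE_{11}\bfpi_r\bfTheta_{r,1}^{} = \bfTheta_{l,2}^T\bfE_{11}\bfTheta_{r,1}^{},
\]
and the analogous one with $\cbfA$ in place of $\cbfE$ (and $\bfA_{11}$ in place of $\bfE_{11}$). These two observations are exactly what is needed to see that the inner factor $\bigl(\sigma\bfTheta_{l,2}^T\bfE_{11}\bfTheta_{r,1}^{} - \bfTheta_{l,2}^T\bfA_{11}\bfTheta_{r,1}^{}\bigr)^{-1}$ appearing in \eqref{restrictedinv} is the genuine inverse of $\bfTheta_{l,2}^T(\sigma\cbfE-\cbfA)\bfTheta_{r,1}^{}$.

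Next I would compute $(\sigma\cbfE-\cbfA)^I(\sigma\cbfE-\cbfA)$ by substituting the definition \eqref{restrictedinv} and $\sigma\cbfE-\cbfA = \bfpi_l(\sigma\bfE_{11}-\bfA_{11})\bfpi_r$. Writing $\bfpi_l = \bfTheta_{l,1}^{}\bfTheta_{l,2}^T$ at the junction, the product telescopes: $\bfTheta_{l,2}^T\bfpi_l(\sigma\bfE_{11}-\bfA_{11})\bfpi_r = \bigl(\sigma\bfTheta_{l,2}^T\bfE_{11}\bfTheta_{r,1}^{} - \bfTheta_{l,2}^T\bfA_{11}\bfTheta_{r,1}^{}\bigr)\bfTheta_{r,2}^T$ after inserting $\bfpi_r = \bfTheta_{r,1}^{}\bfTheta_{r,2}^T$, so the central inverse cancels and leaves $\bfTheta_{r,1}^{}\bfTheta_{r,2}^T = \bfpi_r$, as claimed. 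The companion identity $(\sigma\cbfE-\cbfA)(\sigma\cbfE-\cbfA)^I = \bfpi_l$ comes out the same way, collapsing on the $\bfTheta_{r}$ side instead. For the last two identities I would simply note that $\cbfE^T = \bfpi_r^T\bfE_{11}^T\bfpi_l^T$ and $\cbfA^T = \bfpi_r^T\bfA_{11}^T\bfpi_l^T$, so that $\sigma\cbfE^T-\cbfA^T$ has exactly the same structure as $\sigma\cbfE-\cbfA$ with the roles of $(\bfpi_l,\bfTheta_{l,\cdot})$ and $(\bfpi_r,\bfTheta_{r,\cdot})$ interchanged and $\bfE_{11},\bfA_{11}$ transposed; hence \eqref{restrictedinv_W} is obtained from \eqref{restrictedinv} by that symmetry, and the two transpose identities follow by the argument just given. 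Alternatively one can derive them by transposing the first two identities, after checking $\bigl((\sigma\cbfE-\cbfA)^I\bigr)^T = (\sigma\cbfE^T-\cbfA^T)^I$, which again reduces to the same collapsing computation.

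I do not expect a serious obstacle here; the lemma is essentially bookkeeping with the projector factorizations. The one place to be slightly careful is the well-definedness of \eqref{restrictedinv}: one must confirm that the stated nonsingularity hypothesis on $\sigma\bfTheta_{l,2}^T\bfE_{11}\bfTheta_{r,1}^{} - \bfTheta_{l,2}^T\bfA_{11}\bfTheta_{r,1}^{}$ is the correct one, i.e. that it does not depend on the particular choice of factors $\bfTheta_{l,\cdot},\bfTheta_{r,\cdot}$ in \eqref{phi_def}, and that it matches the invertibility of $s\cbfE-\cbfA$ restricted to the relevant subspaces (equivalently, of the pencil $s\bfTheta_{l,2}^T\bfE_{11}\bfTheta_{r,1}^{}-\bfTheta_{l,2}^T\bfA_{11}\bfTheta_{r,1}^{}$ of system \eqref{310sys}). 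Since \eqref{310sys} was already shown to be an ODE realization equivalent to \eqref{DAEsys}, this is immediate, and the proof is then just the three short telescoping computations above.
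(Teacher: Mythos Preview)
Your proposal is correct and follows essentially the same route as the paper: substitute \eqref{restrictedinv} and $\sigma\cbfE-\cbfA=\bfpi_l(\sigma\bfE_{11}-\bfA_{11})\bfpi_r$, use $\bfTheta_{l,2}^T\bfpi_l=\bfTheta_{l,2}^T$ and $\bfpi_r=\bfTheta_{r,1}^{}\bfTheta_{r,2}^T$ so the inner inverse cancels and $\bfTheta_{r,1}^{}\bfTheta_{r,2}^T=\bfpi_r$ remains, then declare the other three identities analogous. The only difference is cosmetic: the paper writes out this one chain explicitly and dispatches the rest with ``similarly,'' whereas you also spell out the transpose/symmetry argument.
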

\begin{proof}
Following a~similar argument to that in \cite{heinkenschloss2008btm}, the proof of the first equality 
follows directly from (\ref{phi_def}) and (\ref{restrictedinv}). Indeed, we have
\begin{align*}
(\sigma\cbfE-&\cbfA)^I(\sigma\cbfE-\cbfA)
= \bfTheta_{r,1}^{}(\sigma\bfTheta_{l,2}^T\bfE_{11}\bfTheta_{r,1}^{} - \bfTheta_{l,2}^T\bfA_{11}\bfTheta_{r,1}^{})^{-1}
\bfTheta_{l,2}^T\bfpi_l(\sigma\bfE_{11}-\bfA_{11})\bfpi_r \\
&= \bfTheta_{r,1}^{}(\sigma\bfTheta_{l,2}^T\bfE_{11}\bfTheta_{r,1}^{} - \bfTheta_{l,2}^T\bfA_{11}\bfTheta_{r,1}^{})^{-1}
\bfTheta_{l,2}^T(\sigma\bfE_{11}-\bfA_{11})\bfTheta_{r,1}^{}\bfTheta_{r,2}^T \\
&= \bfTheta_{r,1}^{}\bfTheta_{r,2}^T = \bfpi_r.
\end{align*}
The remaining equalities follow similarly.
 \end{proof}
 
 \smallskip
At first glance, the definition of the generalized inverses in (\ref{restrictedinv}) and (\ref{restrictedinv_W}) 
may seem to be irrelevant for model reduction of the descriptor system (\ref{DAEsys}). 
Recall that reducing (\ref{DAEsys}) is equivalent to reducing system (\ref{37sys}) and 
the interpolatory projection method for (\ref{37sys}) will require 
inverting $(\sigma\cbfE-\cbfA)$ and $(\sigma\cbfE^T-\cbfA^T)$. However, these inverses do not exist. 
As a~result, definitions (\ref{restrictedinv}) and (\ref{restrictedinv_W}) become pivotal in order to achieve 
interpolatory model reduction of (\ref{37sys}) and, thereby,  of (\ref{DAEsys}) as shown in the next theorem.

\begin{theorem} \label{hess_interp}
Let $s = \sigma,\mu \in \IC$ be such that the matrices
$$
s\bfTheta_{l,2}^T\bfE_{11}\bfTheta_{r,1}^{} - \bfTheta_{l,2}^T\bfA_{11}\bfTheta_{r,1}^{} 
\qquad \mbox{and}\qquad 
s\bfW^T\bfE_{11}\bfV - \bfW^T\bfA_{11}\bfV
$$ 
are invertible. Define the reduced-order model
\begin{align} \label{eq:ind2rom}
\bfGr(s) &= \bfC\bfV(s\bfW^T\bfE_{11}\bfV - \bfW^T\bfA_{11}\bfV)^{-1}\bfW^T\bfB_1 + \cbfD.
\end{align}
Let $\bfsfb \in \IC^{m}$ and $\bfsfc \in \IC^p$ be fixed nontrivial vectors. 
\begin{enumerate}[{\rm 1.}]
\item If $(\sigma\cbfE - \cbfA)^I\cbfB\bfsfb \in \mathrm{Im}(\bfV)\subset \mathrm{Im}(\bfTheta_{r,1})$ and 
$(\mu\cbfE^T - \cbfA^T)^I\cbfC^T\bfsfc \in \mathrm{Im}(\bfW)\subset \mathrm{Im}(\bfTheta_{l,2})$, 
then $ {\bfH}(\sigma)\bfsfb = \bfGr(\sigma)\bfsfb$
 and  $ \bfsfc^T{\bfH}(\mu) = \bfsfc^T\bfGr(\mu).$
\item If, in addition, $\sigma = \mu$, then $ \bfsfc^T\bfG'(\sigma)\bfsfb = \bfsfc^T\bfGr'(\sigma)\bfsfb.$
\end{enumerate}
\end{theorem}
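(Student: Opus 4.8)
The plan is to reduce the interpolation claim for the descriptor system \eqref{DAEsys} to the corresponding claim for the equivalent ODE system \eqref{310sys}, and then invoke Theorem~\ref{thm:interpolation_highorder}. First I would observe that the transfer function of \eqref{DAEsys} equals that of \eqref{310sys}, so $\bfH(s) = \bfC\bfTheta_{r,1}(s\bfTheta_{l,2}^T\bfE_{11}\bfTheta_{r,1} - \bfTheta_{l,2}^T\bfA_{11}\bfTheta_{r,1})^{-1}\bfTheta_{l,2}^T\bfB_1 + \cbfD$; using $\cbfE = \bfpi_l\bfE_{11}\bfpi_r$, $\cbfB = \bfpi_l\bfB_1$, $\cbfC = \bfC\bfpi_r$ together with Lemma~\ref{rest_inv_lemma}, this can be rewritten as $\bfH(s) = \cbfC(s\cbfE - \cbfA)^I\cbfB + \cbfD$.

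Next I would rewrite the reduced-order model \eqref{eq:ind2rom} in the same generalized-inverse language. Since $\mathrm{Im}(\bfV)\subset \mathrm{Im}(\bfTheta_{r,1})$ and $\mathrm{Im}(\bfW)\subset \mathrm{Im}(\bfTheta_{l,2})$, I can write $\bfV = \bfTheta_{r,1}\bfV_0$ and $\bfW = \bfTheta_{l,2}\bfW_0$ for some matrices $\bfV_0$, $\bfW_0$. The crucial algebraic identity is that $\bfpi_r\bfV = \bfV$ and $\bfpi_l^T\bfW = \bfW$ (immediate from \eqref{phi_def} and \eqref{eq:Theta}), and moreover that $\bfW^T\cbfE\bfV = \bfW^T\bfE_{11}\bfV$, $\bfW^T\cbfA\bfV = \bfW^T\bfA_{11}\bfV$, $\bfW^T\cbfB = \bfW^T\bfB_1$, $\cbfC\bfV = \bfC\bfV$ — each following by absorbing the projectors into $\bfW$ or $\bfV$. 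Hence $\bfGr(s) = \cbfC\bfV(s\bfW^T\cbfE\bfV - \bfW^T\cbfA\bfV)^{-1}\bfW^T\cbfB + \cbfD$, which is exactly the Petrov--Galerkin projection of the (degenerate) pencil $s\cbfE - \cbfA$ onto $\bfV$, $\bfW$ — equivalently, the projection of the nonsingular pencil in \eqref{310sys} onto $\bfV_0$, $\bfW_0$.

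Then I would verify that the subspace hypotheses translate correctly. The vector $(\sigma\cbfE - \cbfA)^I\cbfB\bfsfb$ lies in $\mathrm{Im}(\bfTheta_{r,1})$ by construction \eqref{restrictedinv}, and it equals $\bfTheta_{r,1}$ times $(\sigma\bfTheta_{l,2}^T\bfE_{11}\bfTheta_{r,1} - \bfTheta_{l,2}^T\bfA_{11}\bfTheta_{r,1})^{-1}\bfTheta_{l,2}^T\bfB_1\bfsfb$; since this is assumed to lie in $\mathrm{Im}(\bfV) = \mathrm{Im}(\bfTheta_{r,1}\bfV_0)$, it follows that $(\sigma\bfTheta_{l,2}^T\bfE_{11}\bfTheta_{r,1} - \bfTheta_{l,2}^T\bfA_{11}\bfTheta_{r,1})^{-1}\bfTheta_{l,2}^T\bfB_1\bfsfb \in \mathrm{Im}(\bfV_0)$, which is precisely the standard condition \eqref{eq:condV} applied to the ODE realization in \eqref{310sys} with $N=1$. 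The dual statement gives \eqref{eq:condW} for $\bfW_0$. With both conditions in hand, parts 1--3 of Theorem~\ref{thm:interpolation_highorder} (with $N=M=1$) yield $\bfH(\sigma)\bfsfb = \bfGr(\sigma)\bfsfb$, $\bfsfc^T\bfH(\mu) = \bfsfc^T\bfGr(\mu)$, and the Hermite condition $\bfsfc^T\bfG'(\sigma)\bfsfb = \bfsfc^T\bfGr'(\sigma)\bfsfb$ when $\sigma = \mu$.

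The main obstacle I anticipate is bookkeeping with the projectors: one must check carefully that inserting $\bfpi_l$, $\bfpi_r$ (and their transposes) wherever needed is legitimate, i.e., that $\bfpi_r\bfV = \bfV$, $\bfW^T\bfpi_l = \bfW^T$, and the consequent identities $\bfW^T\bfE_{11}\bfV = \bfW^T\bfpi_l\bfE_{11}\bfpi_r\bfV$, etc., all hold — this rests on the containments $\mathrm{Im}(\bfV)\subset\mathrm{Im}(\bfTheta_{r,1})$ and $\mathrm{Im}(\bfW)\subset\mathrm{Im}(\bfTheta_{l,2})$ and the biorthogonality \eqref{eq:Theta}. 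A secondary subtlety is confirming that the full-order transfer function of \eqref{DAEsys} genuinely coincides with $\cbfC(s\cbfE-\cbfA)^I\cbfB + \cbfD$ and has no polynomial part beyond the constant $\cbfD$, which is the content of the index-2 decoupling recalled from \cite{heinkenschloss2008btm}; once that is granted, the rest is a direct translation to the ODE setting where Theorem~\ref{thm:interpolation_highorder} applies verbatim.
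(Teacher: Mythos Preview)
Your proposal is correct and follows essentially the same route as the paper: factor $\bfV=\bfTheta_{r,1}\bfV_0$, $\bfW=\bfTheta_{l,2}\bfW_0$ (the paper's $\widetilde{\bfV},\widetilde{\bfW}$), rewrite both $\bfH(s)$ and $\bfGr(s)$ in terms of the ODE realization \eqref{310sys}, translate the generalized-inverse subspace condition into the standard one for $\bfV_0$, and then invoke Theorem~\ref{thm:interpolation_highorder}. Your explicit note that $\bfpi_r\bfV=\bfV$, $\bfW^T\bfpi_l=\bfW^T$ and hence $\bfW^T\cbfE\bfV=\bfW^T\bfE_{11}\bfV$, etc., is exactly the bookkeeping the paper carries out implicitly via \eqref{Btilde} and the substitution \eqref{v_def}.
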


\bigskip
\begin{remark}
Before presenting the proof, we want to emphasize that this interpolation result is different than
the usual interpolation framework given in Theorem~\textup{\ref{thm:interpolation_highorder}}, 
where the projection matrices $\bfV$ and $\bfW$ are constructed using 
$\bfA$, $\bfE$, $\bfB$ and $\bfC$ and then the projection is applied to the same quantities. 
In Theorem~\textup{\ref{hess_interp}}, however, the projection matrices $\bfV$ and $\bfW$ are constructed 
using the system matrices of \textup{(\ref{37sys})}, namely $\cbfA, \cbfE, \cbfB$ and $\cbfC$. 
But then the projection (model reduction) is applied to the system matrices of \textup{(\ref{DAEsys})}, 
namely $\bfE_{11}, \bfA_{11}, \bfB_1$ and $\bfC$. Thus, the proof will serve  to fill in this important gap. 
\end{remark}

\bigskip
\begin{proof} Since systems (\ref{DAEsys}) and (\ref{310sys}) are equivalent, they have the same transfer function given by
$$
{\bfH}(s) = \bfC\bfTheta_{r,1}^{}(s\bfTheta_{l,2}^T\bfE_{11}\bfTheta_{r,1}^{} - \bfTheta_{l,2}^T\bfA_{11}\bfTheta_{r,1}^{})^{-1}
\bfTheta_{l,2}^T\bfB_1 + \cbfD. 
$$
Since $\bfTheta_{l,2}^T\bfE_{11}\bfTheta_{r,1}^{}$ in (\ref{310sys}) is nonsingular, we make use of Theorem~\ref{thm:interpolation_highorder}.
Define $\widetilde{\bfV}$ and $\widetilde{\bfW}$ such that 
 \begin{eqnarray} \label{v_def}
 \bfV = \bfTheta_{r,1}\widetilde\bfV \qquad \text{and} \qquad
 \bfW = \bfTheta_{l,2}\widetilde\bfW.
 \end{eqnarray}
 Pluging these matrices into (\ref{eq:ind2rom}), we obtain that
\begin{align*}
\widetilde{\bfG}(s) = \bfC\bfTheta_{r,1}\widetilde\bfV(s\widetilde\bfW^T\bfTheta_{l,2}^T
\bfE_{11}\bfTheta_{r,1}\widetilde\bfV - \widetilde\bfW^T\bfTheta_{l,2}^T\bfA_{11}\bfTheta_{r,1}\widetilde\bfV)^{-1}
\widetilde\bfW^T\bfTheta_{l,2}^T\bfB_1 + \cbfD.
\end{align*}
Moreover, it follows from (\ref{eq:Theta}) that $\widetilde\bfV = \bfTheta_{r,2}^T \bfV$ and 
 $\widetilde\bfW = \bfTheta_{l,1}^T \bfW$.
To prove the first claim in part $1$, we note that (\ref{phi_def}) implies that
\begin{eqnarray} \label{Btilde}
\bfTheta_{l,2}^T\cbfB = \bfTheta_{l,2}^T\bfpi_l\bfB_1 = \bfTheta_{l,2}^T\bfTheta_{l,1}^{}\bfTheta_{l,2}^T\bfB_1 = 
\bfTheta_{l,2}^T\bfB_1.
\end{eqnarray}
Since $(\sigma\cbfE - \cbfA)^I\cbfB\bfsfb \in \mathrm{Im}(\bfV)$, there exists $\bfq \in \IR^r$ such that
$(\sigma\cbfE - \cbfA)^I\cbfB\bfsfb =\bfV\bfq$.
Using (\ref{restrictedinv}) (\ref{v_def}) and  (\ref{Btilde}), this equation can be written as 
\begin{align*}
\bfTheta_{r,1}^{}(\sigma\bfTheta_{l,2}^T\bfE_{11}\bfTheta_{r,1}^{} - \bfTheta_{l,2}^T\bfA_{11}\bfTheta_{r,1}^{})^{-1}
\bfTheta_{l,2}^T\bfB_1\bfb = \bfTheta_{r,1}\widetilde\bfV\bfq.
\end{align*}
The left multiplication by $\bfTheta_{r,2}^T$ gives
\begin{align*}
(\sigma\bfTheta_{l,2}^T\bfE_{11}\bfTheta_{r,1}^{} - \bfTheta_{l,2}^T\bfA_{11}\bfTheta_{r,1}^{})^{-1}
\bfTheta_{l,2}^T\bfB_1\bfsfb  = \widetilde\bfV\bfq.
\end{align*}
Hence, $(\sigma\bfTheta_{l,2}^T\bfE_{11}\bfTheta_{r,1}^{} - \bfTheta_{l,2}^T\bfA_{11}\bfTheta_{r,1}^{})^{-1}
\bfTheta_{l,2}^T\bfB_1\bfsfb \in \mathrm{Im}(\widetilde\bfV)$. Then it follows from 
Theorem~\ref{thm:interpolation_highorder} that  ${\bfH}(\sigma)\bfsfb = \widetilde{\bfH}(\sigma)\bfsfb$. 
The equation $\bfsfc^T \bfG(\sigma) = \bfsfc^T\widetilde{\bfH}(\sigma)$ can be obtained similarly. 
The proof of part $2$ follows from part $3$ of Theorem~\ref{thm:interpolation_highorder}.  
\end{proof}

It should be noted that the conditions $\mathrm{Im}(\bfV) \subset \mathrm{Im}(\bfTheta_{r,1})$
and $\mathrm{Im}(\bfW) \subset \mathrm{Im}(\bfTheta_{l,2})$ in part 1 of Theorem~\ref{hess_interp} 
are automatically fulfilled if for given interpolation points $\{\sigma_i\}_{i=1}^r$, $\{\mu_i\}_{i=1}^r$ 
and tangential directions $\{\bfsfb_i\}_{i=1}^r$, $\{\bfsfc_i\}_{i=1}^r$, we choose
$$
\arraycolsep =2pt
\begin{array}{rcl}
\mathrm{Im}(\bfV) & = & \mathrm{span}\{(\sigma_1\cbfE - \cbfA)^I\cbfB\bfsfb_1,\ldots,(\sigma_r\cbfE - \cbfA)^I\cbfB\bfsfb_r\}, \\
\mathrm{Im}(\bfW) & = & \mathrm{span}\{(\mu_1\cbfE^T - \cbfA^T)^I\cbfC^T\bfsfc_1,\ldots,(\sigma_r\cbfE^T - \cbfA^T)^I\cbfC^T
\bfsfc_r\}.
\end{array}
$$
 
 \subsection{Computational issues related to the reduction of index-2 descriptor systems} 
  \label{sec:saddle}

Even though Theorem~\ref{hess_interp} shows how to enforce interpolation for the descriptor system 
(\ref{DAEsys}),
the spectral projectors are still implicitly hidden in the de\-fi\-ni\-tions of 
$(\sigma\cbfE - \cbfA)^I$ and 
$(\sigma\cbfE^T - \cbfA^T)^I$.
It has been shown in \cite{heinkenschloss2008btm} how to compute the matrix-vector product
$(\cbfE + \tau\cbfA)^I \bff$ for a~given vector $\bff$ 
without explicitly forming $(\cbfE + \tau\cbfA)^I$. This approach can also be used in interpolatory model reduction, 
where the quantities of interest are  
$(\sigma\cbfE - \cbfA)^I\cbfB\bfsfb$ and $(\mu\cbfE^T - \cbfA^T)^I\cbfC^T\bfsfc$.
The proof of the following result is analogous to those in \cite{heinkenschloss2008btm}, and, therefore, it is omitted.

 \begin{lemma} \label{compute_v}
 Let $s=\sigma,\mu$ be such that 
$s\bfTheta_{l,2}^T\bfE_{11}\bfTheta_{r,1}^{} - \bfTheta_{l,2}^T\bfA_{11}\bfTheta_{r,1}^{}$ is invertible. 
Then the vector
 \begin{eqnarray} \label{saddle_B_star}
\bfv =  (\sigma\cbfE - \cbfA)^I\cbfB\bfsfb
 \end{eqnarray}
 solves
 \begin{eqnarray} \label{saddle_B}
 \left[ \begin{array}{cc} \sigma\bfE_{11} - \bfA_{11} & \bfA_{12}\\  \bfA_{21} & \mathbf{0} \end{array} \right]
\left[ \begin{array}{cc} \bfv \\  \bfz \end{array} \right]=\left[ \begin{array}{cc} \bfB_1\bfsfb \\  
\mathbf{0} \end{array} \right],
 \end{eqnarray}
 and 
the vector
 \begin{eqnarray} \label{saddle_C_star}
\bfw =  (\mu\cbfE^T - \cbfA^T)^I\cbfC^T\bfsfc
 \end{eqnarray}
 solves
 \begin{eqnarray} \label{saddle_C}
 \left[ \begin{array}{cc} \mu\bfE_{11}^T - \bfA_{11}^T & \bfA_{21}^T\\  \bfA_{12}^T & \mathbf{0} \end{array} \right]
\left[ \begin{array}{cc} \bfw \\  \bfq \end{array} \right]=\left[ \begin{array}{cc} \bfC^T\bfsfc \\  
\mathbf{0} \end{array} \right].
 \end{eqnarray}
\end{lemma}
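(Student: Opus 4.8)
The plan is to show that $\bfv=(\sigma\cbfE-\cbfA)^I\cbfB\bfsfb$ is exactly the leading block of the (unique) solution of the saddle-point system $(\ref{saddle_B})$, and then to obtain the statement for $\bfw$ by running the mirror-image argument on the transposed data.

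First I would unfold the definition $(\ref{restrictedinv})$. Using $\bfTheta_{l,2}^T\cbfB=\bfTheta_{l,2}^T\bfB_1$ (the identity $(\ref{Btilde})$, which is just $\bfTheta_{l,2}^T\bfpi_l=\bfTheta_{l,2}^T$ coming from $(\ref{phi_def})$ and $(\ref{eq:Theta})$), one has $\bfv=\bfTheta_{r,1}\bfg$, where $\bfg$ solves the small system $(\sigma\bfTheta_{l,2}^T\bfE_{11}\bfTheta_{r,1}-\bfTheta_{l,2}^T\bfA_{11}\bfTheta_{r,1})\bfg=\bfTheta_{l,2}^T\bfB_1\bfsfb$. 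Two facts then fall out: $\bfv\in\mathrm{Im}(\bfTheta_{r,1})$, and $\bfTheta_{l,2}^T\bigl((\sigma\bfE_{11}-\bfA_{11})\bfv-\bfB_1\bfsfb\bigr)=\mathbf{0}$.

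Next I would read off the two block rows of $(\ref{saddle_B})$. The bottom row $\bfA_{21}\bfv=\mathbf{0}$ holds because $\bfA_{21}\bfTheta_{r,1}=\mathbf{0}$, i.e.\ $\mathrm{Im}(\bfTheta_{r,1})=\mathrm{Im}(\bfpi_r)\subseteq\ker(\bfA_{21})$, which is visible from the explicit formula for $\bfpi_r$ once $\bfA_{12},\bfA_{21}$ are known to have full rank and $\bfA_{21}\bfE_{11}^{-1}\bfA_{12}$ to be invertible. For the top row, the residual $\bfr:=(\sigma\bfE_{11}-\bfA_{11})\bfv-\bfB_1\bfsfb$ satisfies $\bfTheta_{l,2}^T\bfr=\mathbf{0}$ by the previous step; since $\ker(\bfTheta_{l,2}^T)=\ker(\bfpi_l)=\mathrm{Im}(\bfA_{12})$ (from $(\ref{phi_def})$ and the explicit formula for $\bfpi_l$), there is a unique $\bfz$ with $\bfr=-\bfA_{12}\bfz$, and this is precisely the top row of $(\ref{saddle_B})$. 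I would close by noting that under the standing assumptions the coefficient matrix of $(\ref{saddle_B})$ is nonsingular, so $(\bfv,\bfz)$ is \emph{the} solution and $\bfv$ is genuinely characterized by it --- which is the whole point, since $\bfv$ is then produced by a single sparse solve, with no projector ever formed. One may equally reach the identity $\bfpi_l\bfr=\mathbf{0}$ from Lemma~\ref{rest_inv_lemma}: $(\sigma\cbfE-\cbfA)(\sigma\cbfE-\cbfA)^I=\bfpi_l$ and $\bfpi_l\cbfB=\cbfB$ give $(\sigma\cbfE-\cbfA)\bfv=\cbfB\bfsfb$, and $\bfpi_r\bfv=\bfv$ then collapses this to $\bfpi_l\bfr=\mathbf{0}$. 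The claim for $\bfw=(\mu\cbfE^T-\cbfA^T)^I\cbfC^T\bfsfc$ follows by repeating all of this with $(\bfE_{11},\bfA_{11},\bfA_{12},\bfA_{21},\bfB_1\bfsfb,\bfTheta_{l,2},\bfTheta_{r,1})$ replaced by $(\bfE_{11}^T,\bfA_{11}^T,\bfA_{21}^T,\bfA_{12}^T,\bfC^T\bfsfc,\bfTheta_{r,1},\bfTheta_{l,2})$ and the transposed half of Lemma~\ref{rest_inv_lemma}, giving $(\ref{saddle_C})$.

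The step I expect to be the only real obstacle is bookkeeping, not substance: keeping straight which spectral projector ($\bfpi_l$ or $\bfpi_r$) multiplies $\cbfE,\cbfA,\cbfB,\cbfC$ on which side, and hence whether the residual lands in $\mathrm{Im}(\bfA_{12})$ or in $\mathrm{Im}(\bfE_{11}^{-1}\bfA_{12})$ and whether the bottom constraint reads $\bfA_{21}\bfv=\mathbf{0}$ or $\bfA_{21}\bfE_{11}^{-1}\bfv=\mathbf{0}$. Once the ranges and kernels of $\bfpi_l,\bfpi_r$ (equivalently the column spaces of $\bfTheta_{l,2}$ and $\bfTheta_{r,1}$) are pinned down, both block rows come out of the short computation already done for $(\cbfE+\tau\cbfA)^I$ in \cite{heinkenschloss2008btm}, which is precisely why the authors could defer to that reference.
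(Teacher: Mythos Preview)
Your approach is correct and is precisely the argument the paper defers to in \cite{heinkenschloss2008btm} (the paper itself omits the proof): write $\bfv=\bfTheta_{r,1}\bfg$, read the constraint row off the range of one projector, and recover the multiplier $\bfz$ from the residual lying in the kernel of the other. Your closing caveat is well placed --- with the paper's literal formulas one computes $\mathrm{Im}(\bfpi_r)=\ker(\bfA_{21}\bfE_{11}^{-1})$ and $\ker(\bfpi_l)=\mathrm{Im}(\bfE_{11}^{-1}\bfA_{12})$, not $\ker(\bfA_{21})$ and $\mathrm{Im}(\bfA_{12})$, so the $l/r$ labels in the surrounding displays appear transposed relative to the saddle-point form, exactly the bookkeeping slip you anticipated.
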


\medskip
From a computational perspective of implementing Theorem \ref{hess_interp}, 
the importance of this result is clear.  To achieve interpolation, Theorem \ref{hess_interp} 
relies on computing the quantities 
$(\sigma\cbfE - \cbfA)^I\cbfB\bfsfb$ and $(\sigma\cbfE^T - \cbfA^T)^I\cbfC^T\bfsfc,$ both of which involve 
the computation of $\bfTheta_{l,2}$  and $\bfTheta_{r,1}$. However, Lemma~\ref{compute_v} illustrates 
that the computation of these basis matrices is unnecessary and only the linear systems  (\ref{saddle_B})
and (\ref{saddle_C}) need to be solved. 
This observation leads to Algorithm \ref{index2_alg} below for interpolatory model reduction of Stokes-type 
descriptor systems of index~2. 

\begin{figure}[ht]
\centering
    \framebox[5.1in][t]{
    \begin{minipage}[c]{4.95in}
    \begin{algorithm} \label{index2_alg}
    {\bf  Interpolatory model reduction for Stokes-type \\ \hspace*{28mm} 
    descriptor systems of index~2}
\begin{enumerate}[{\rm 1)}]
\item Make an initial selection of the interpolation points  $\{\sigma_i\}_{i=1}^r$ and the 
tangent directions
$\{{\bfsfb}_i\}_{i=1}^r$ and  $\{{\bfsfc}_i\}_{i=1}^r$.
\item For $i = 1, \ldots, r$, solve
$$ 
\left[ \begin{array}{cc} \sigma_i\bfE_{11}-\bfA_{11} & \bfA_{12}\\  \bfA_{21} & \mathbf{0} \end{array} \right]
\left[ \begin{array}{cc} \bfv_i \\  \bfz \end{array} \right]= 
\left[ \begin{array}{cc} \bfB_1\bfsfb_i \\  \mathbf{0} \end{array} \right],
$$
$$
\left[ \begin{array}{cc} \sigma_i\bfE_{11}^T - \bfA_{11}^T& \bfA_{21}^T\\  \bfA_{12}^T & \mathbf{0} \end{array} \right]
\left[ \begin{array}{cc} \bfw_i \\  \bfq \end{array} \right]= 
\left[ \begin{array}{cc} \bfC^T\bfsfc_i \\  \mathbf{0} \end{array} \right].
$$
\item $\bfV =  \left[\bfv_1,\dots, \bfv_r\right]$, \quad
       $\bfW =  \left[\bfw_1,\dots, \bfw_r\right]$.
\item $\bfEr= \bfW^T \bfE_{11} \bfV$, $\bfAr = \bfW^T \bfA_{11} \bfV$, $\bfBr = \bfW^T \bfB_1$, $\bfCr = \bfC\bfV$,
$\bfDr = \cbfD.$
\end{enumerate}
\end{algorithm}
    \end{minipage}
    }
  \end{figure}

Once a~computationally effective bi-tangential Hermite interpolation framework is established for index-2 descriptor systems, 
extending it to optimal $\Htwo$ model reduction via IRKA is straightforward and given in Algorithm \ref{index2_IRKA}. 
It follows from the structure of this algorithm that upon convergence
  the reduced model  \mbox{$\bfGr(s)= \bfCr (s\bfEr\! -\! \bfAr)^{-1}\bfBr\! +\! \widetilde{\bfD}$}
  satisfies the first-order conditions for $\Htwo$ optimality.
\begin{figure}[ht]
\centering
    \framebox[5.1in][t]{
    \begin{minipage}[c]{4.95in}
 \begin{algorithm} \label{index2_IRKA}
     {\bf IRKA for Stokes-type descriptor system of index~2}
\begin{enumerate}[{\rm 1)}]
\item Make an initial shift selection $\{\sigma_i\}_{i=1}^r$ and initial tangent directions 
$\{{\bfsfb}_i\}_{i=1}^r$ and $\{{\bfsfc}_i\}_{i=1}^r$.
\item Apply Algorithm~\textup{\ref{index2_alg}} to obtain  $\bfEr$,  $\bfAr$, $\bfBr$, $\bfCr$ and 
$\bfDr$.
\item while (not converged)
\begin{enumerate}[{\rm a)}]
\item Compute $\bfY^*\bfAr\bfZ = diag(\lambda_1,\ldots,\lambda_r)$ and $\bfY^*\bfEr\bfZ = \bfI$, 
where the columns of $\bfZ=[\bfz_1,\ldots,\bfz_r]$ and $\bfY=[\bfy_1,\ldots,\bfy_r]$ are, respectively, 
the right and left eigenvectors of $\lambda\bfEr-\bfAr$.
\item $\sigma_i \leftarrow -\lambda_i$,
${\bfsfb}_i^T \leftarrow \bfy_i^*\bfBr$ and ${\bfsfc}_i \leftarrow \bfCr\bfz_i$  for $i=1,\ldots,r$.

\item Apply Algorithm~\textup{\ref{index2_alg}} to obtain  $\bfEr$,  $\bfAr$, $\bfBr$, $\bfCr$ and 
$\bfDr$.
\end{enumerate}
end while

\end{enumerate}
\end{algorithm}
    \end{minipage}
    }
  \end{figure}

\begin{remark} 
As shown in \textup{\cite{heinkenschloss2008btm}}, the general case $\bfB_2 \neq \mathbf{0}$ can be handled similar
to the case $\bfB_2 = \mathbf{0}$.  First note that the state $\bfx_1(t)$ can be
decomposed as \mbox{$\bfx_1(t) = \bfx_0(t) + \bfx_g(t)$}, where
$\bfx_g(t) = -\bfE_{11}^{-1}\bfA_{12}(\bfA_{21}\bfE_{11}^{-1}\bfA_{12})^{-1}\bfB_2\bfu(t)$
and $\bfx_0(t)$ sa\-tis\-fies $\bfA_{21}\bfx_0(t) = 0$. After some algebraic manipulations, this leads to
\begin{equation}   \label{37sys_B2}
\arraycolsep=2pt
\begin{array}{rcl}
\bfpi_l\bfE_{11}\bfpi_r \dot{\bfx}_0(t) & = & \bfpi_l\bfA_{11}\bfpi_r \bfx_0(t) + 
\bfpi_l\bfB\bfu(t), \\
 \bfy(t) &=& \bfC\bfpi_r\bfx_0(t) + \cbfD \bfu(t) -\bfC_2(\bfA_{21}\bfE_{11}^{-1}\bfA_{12})^{-1}\bfB_2\dot{\bfu}(t), \\
 \end{array}
 \end{equation}
 where
 \begin{eqnarray}
 \bfC = \bfC_1 - \bfC_2(\bfA_{21}\bfE_{11}^{-1}\bfA_{12})^{-1}\bfA_{21}\bfE_{11}^{-1}\bfA_{11}, \\
 \bfB = \bfB_1 - \bfA_{11}\bfE_{11}^{-1}\bfA_{12}(\bfA_{21}\bfE_{11}^{-1}\bfA_{12})^{-1}\bfB_2, \\
 \cbfD = \bfD - \bfC_2(\bfA_{21}\bfE_{11}^{-1}\bfA_{12})^{-1}\bfA_{21}\bfE_{11}^{-1}\bfB_1. \label{Dtilde}
  \end{eqnarray}
Therefore, the $\bfB_2 \neq 0$ case extends to the interpolation framework as well
by defining
   \begin{align*}
\widehat{\cbfE} = \bfpi_l\bfE_{11}\bfpi_r, \quad \widehat{\cbfA} = \bfpi_l\bfA_{11}\bfpi_r, \quad 
\widehat{\cbfB} = \bfpi_l\bfB, \quad \widehat{\cbfC} = \bfC\bfpi_r
\end{align*}
and applying Theorem~\textup{\ref{hess_interp}} with $\widehat{\cbfE}$, $\widehat{\cbfA}$, 
$\widehat{\cbfB}$, $\widehat{\cbfC}$ and 
$\widehat{\cbfD}  = \cbfD - s\bfC_2(\bfA_{21}\bfE_{11}^{-1}\bfA_{12})^{-1}\bfB_2$ instead 
of $\cbfE$, $\cbfA$, $\cbfB$, $\cbfC$ and $\cbfD$. 
\end{remark}

\subsection{Numerical results for Oseen equations} \label{sec:oseen_example}

The model borrowed from \cite{heinkenschloss2008btm} is obtained by  discretizing the Oseen equations 
and describe the flow of a~viscous and incompressible fluid in a~domain $\Omega \in \IR^2$ representing 
a~channel with a~backward facing step. 
A~spatial discretization using the finite element method leads to the index-$2$ descriptor system
(\ref{DAEsys}) with $\bfE_{11}, \bfA_{11} \in \IR^{5520 \times 5520}$, 
$\bfA_{12}^{}, \bfA_{21}^T\in \IR^{5520 \times 761}$, 
$\bfB_{1}\in \IR^{5520 \times 6}$, 
$\bfB_{2}\in \IR^{761 \times 6}$, 
$\bfC_{1}\in \IR^{2 \times 5520}$, 
$\bfC_{2}\in \IR^{2 \times 761}$, $\bfD=0$, see \cite{heinkenschloss2008btm} for more details on the model. 
Note that $\bfB_2 \neq 0$ and the transfer function grows unbounded around $s=\infty$.

We approximate this system by a~model of order $r = 20$ using the balanced truncation method as described 
in \cite{heinkenschloss2008btm} and the $\Htwo$ optimal model reduction method given in Algorithm~\ref{index2_IRKA}. 
The amplitude Bode plots of the full model and two reduced-order models depicted in Figure~\ref{fig:oseen_bode} 
clearly illustrate that  interpolation-based Algorithm~\ref{index2_IRKA}
leads to a high-fidelity reduced model replicating  the full-order transfer function with almost no loss of 
accuracy and matching the performance of the balanced truncation method. The accuracy of this 
interpolation-based method  is due to the fact that  we do not choose the interpolation points in an {\it ad hoc} fashion; 
instead Algorithm~\ref{index2_IRKA} iteratively leads to $\Htwo$ optimal interpolation points.   
As the difficulty in computing $\Htwo$ norm of the error is clear, 
we approximately compute the relative $\Hinf$-error $\frac{\|\bfH_{\rm sp}-\tilde{\bfH}_{\rm sp}\|_{\mathcal{H}_{\infty}}}
{\|\bfH_{\rm sp}\|_{\mathcal{H}_{\infty}}}$ for both reduced-order models by sampling the imaginary axis. 
These errors for the balanced truncation method and Algorithm~\ref{index2_IRKA} are, respectively, 
$3.3284 \times 10^{-6}$ and  $8.9663\times 10^{-6}$. Both reduced-order models are highly accurate. It is expected 
that the $\Hinf$-error in balanced truncation will be smaller than that in IRKA. 
While our method tries to minimize the $\Htwo$-norm, the balanced truncation method is  tailored towards reducing
the $\Hinf$-norm. Indeed, these numbers are further signs for the success of the
interpolatory-based model reduction method as it produces a very accurate model, almost matching the accuracy 
of the balanced truncation approach.
These observations are similar to those on IRKA  whose $\Hinf$-norm behavior was close to or even better in some 
cases than that of balanced truncation \cite{antoulas2010imr,gugercin2008hmr}.
   
\begin{figure}[ht]
   \centering
\includegraphics[scale=0.325]{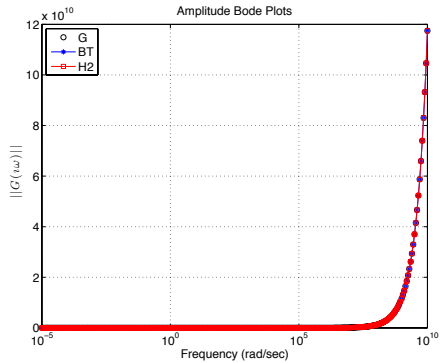}
\caption{\small {Oseen equation: amplitude Bode plots of the full and reduced models}}
   \label{fig:oseen_bode}
\end{figure}

To further illustrate the accuracy in the reduced-order model computed by Algorithm~\ref{index2_IRKA}, 
we display the time domain response plots resulting from two different input selections. 
In the left pane of Figure~\ref{fig:problem2_time1}, we plot the outputs for the input selections 
$\bfu_i(t) = \text{sin}(6it)$ for $i = 1, \ldots, 6$ (recall that the system has $6$ inputs). 
The figure illustrate a perfect match between the outputs of the full and reduced-order systems. 
Error in the outputs for the same input selection is given in the right pane of Figure~\ref{fig:problem2_time1}. 
Note the difference in the scale of the error plot compared to the actual output; the error is four orders of 
magnitude smaller. 
We repeat the same experiments with   ${\bfu}_i(t) = \text{sin}(it)$ for $i = 1, \ldots, 6$ and reach the same 
conclusions  as shown in Figure~\ref{fig:problem2_time2}.

\begin{figure}[ht]
   \centering
\includegraphics[scale=0.37]{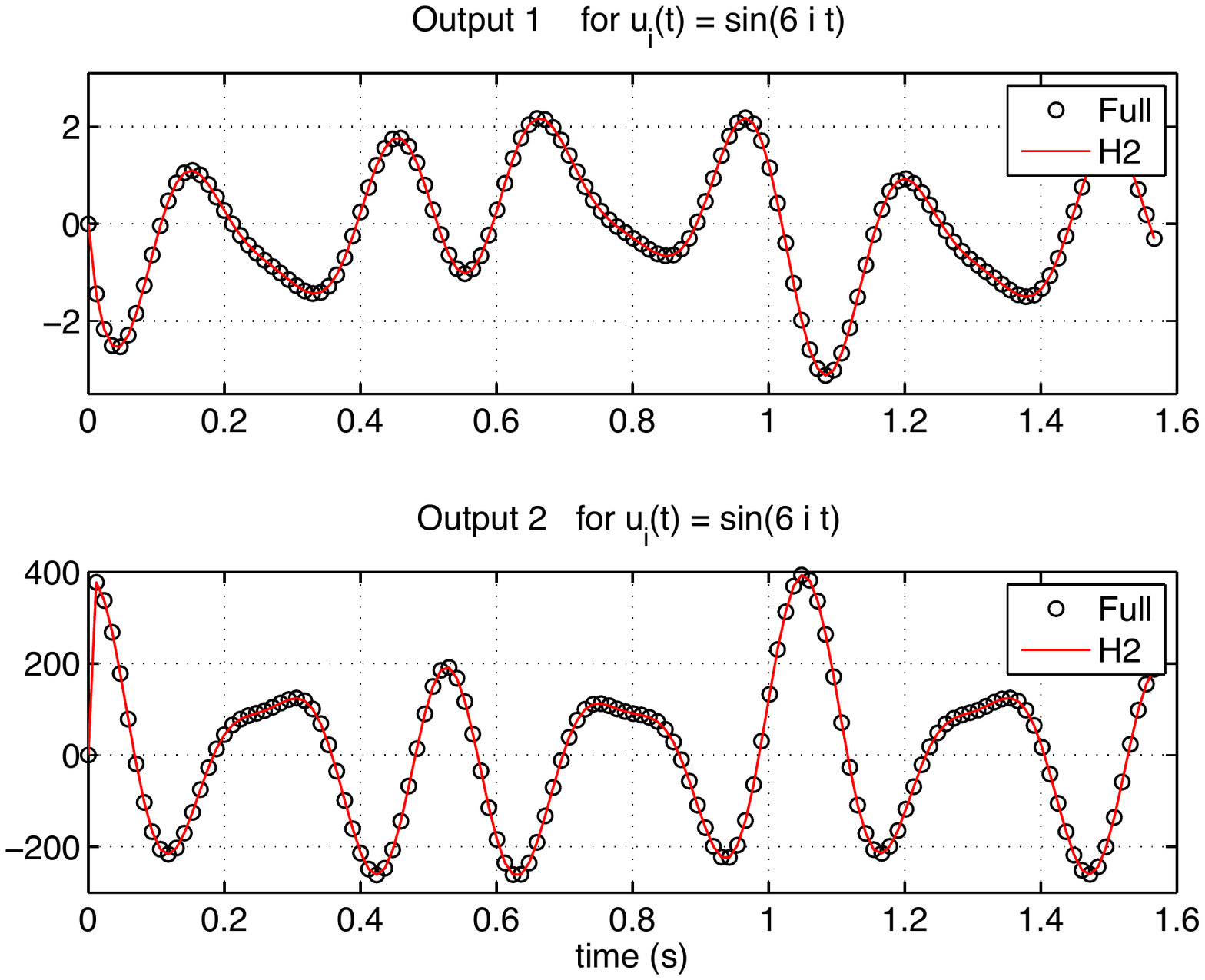}
\includegraphics[scale=0.37]{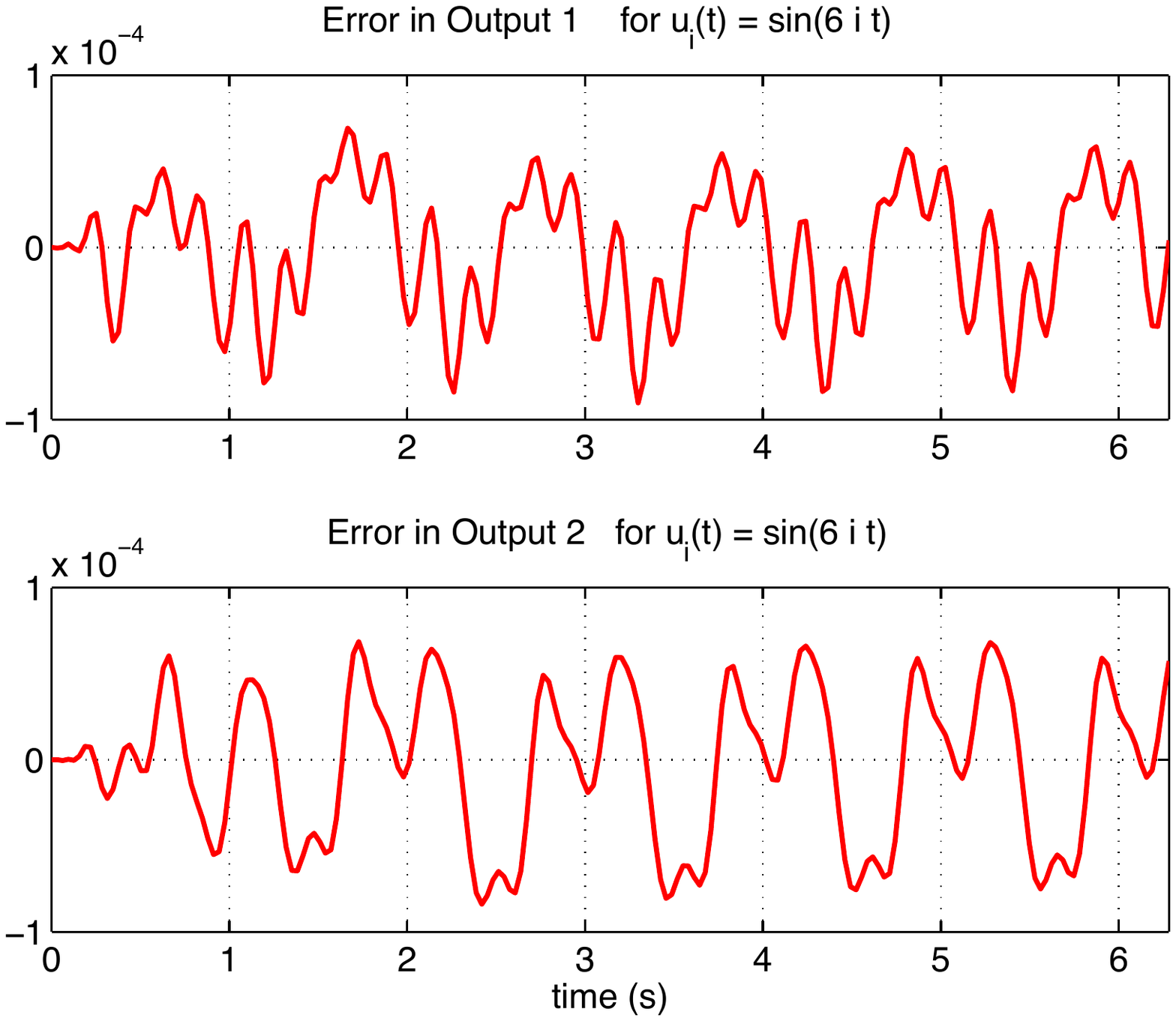}
\caption{\small {Oseen equation: (left) time domain response for ${\bfu}_i(t)=\sin(6it)$; 
          (right)  error in time domain response for ${\bfu}_i(t)=\sin(6it)$.}}
   \label{fig:problem2_time1}
\end{figure}

\begin{figure}[ht]
   \centering
\includegraphics[scale=0.37]{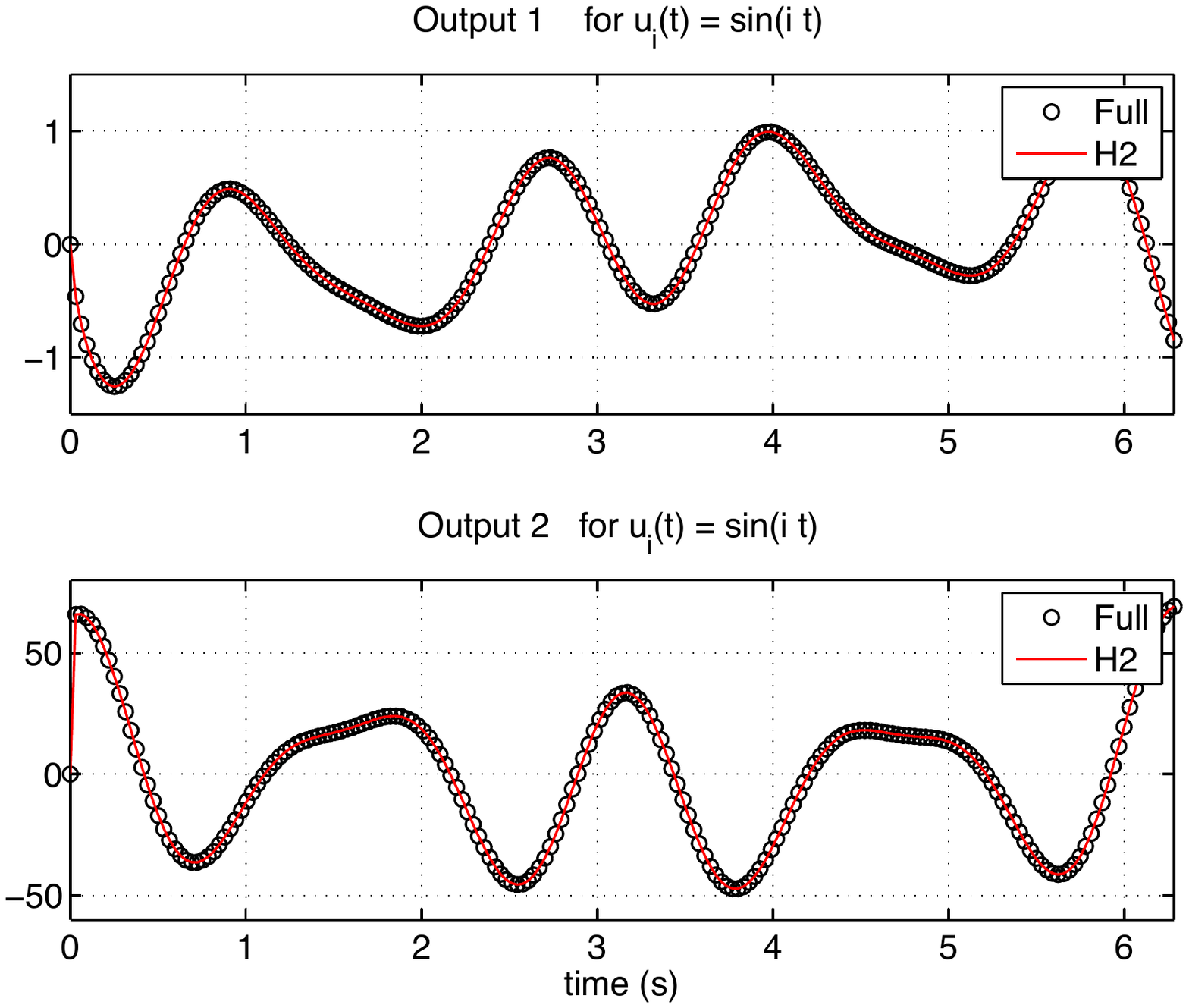}
\includegraphics[scale=0.37]{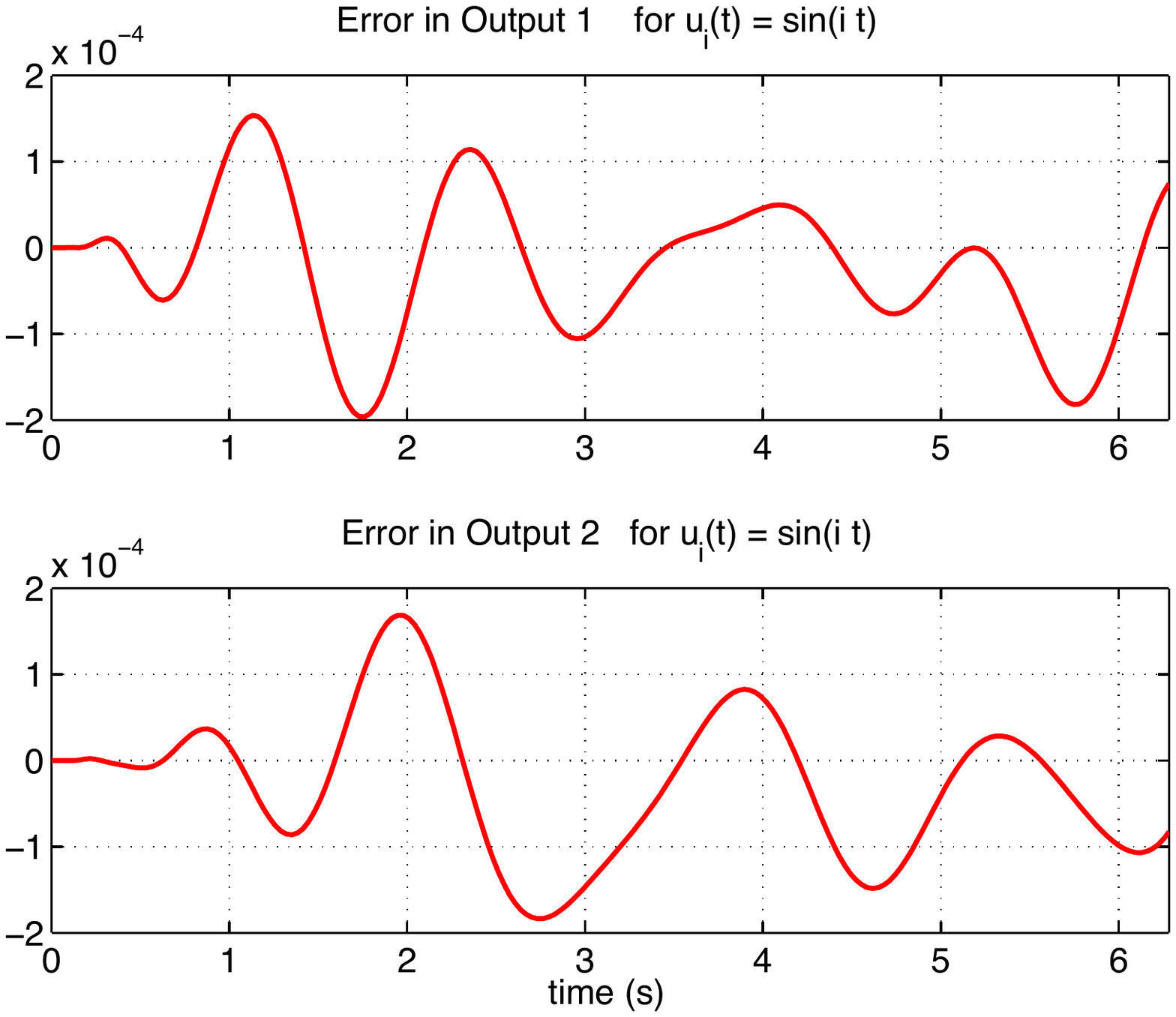}
\caption{\small {Oseen equation: (left) time domain response for $\bfu_i(t)=\sin( it)$; 
          (right)  error in time domain response for $\bfu_i(t)=\sin(it)$.}}
   \label{fig:problem2_time2}
\end{figure}

\section{Conclusions}
For interpolatory model reduction of descriptor systems,  we have  introduced subspace conditions that not only 
guarantee interpolation conditions but also automatically enforce matching the polynomial part of the transfer 
function, thus preventing the error grow unbounded. We have also extended the optimal $\Htwo$ interpolation point 
selection strategy to descriptor systems.
For the index-$1$ and index-$2$ descriptor systems, we have shown how to construct the reduced-order models 
without computing the deflating subspaces corresponding to the finite and infinite eigenvalues explicitly. 
 Several numerical examples have supported the theoretical discussion.

\section{Acknowledgements} 
The authors thank Prof. M. Heinkenschloss for providing the data and  the MATLAB files  for the numerical example 
of Section~\ref{sec:oseen_example}. The work of S.~Gugercin was supported in part by NSF through Grant DMS-0645347. 
The work of T.~Stykel was supported in part by the Research Network FROPT:
{\em Model Reduction Based Optimal Control for Field-Flow Fractionation}\, 
  funded by the German Federal Ministry of Education and Science (BMBF), 
  Grant~05M10WAB. 
 
 \bibliographystyle{plain}
\bibliography{interpolation_dae}

\end{document}